\documentclass{amsart}

\usepackage[inactive]{srcltx} 
\usepackage{amsmath, amsthm, amscd, amsfonts, amssymb, graphicx, color, extarrows}
\vfuzz2pt 

 \newtheorem{thm}{Theorem}[section]
 \newtheorem{cor}[thm]{Corollary}
 \newtheorem{lem}[thm]{Lemma}
  
 \newtheorem{prop}[thm]{Proposition}
 \theoremstyle{definition}
 \newtheorem{defn}[thm]{Definition}
 \theoremstyle{remark}
 
 \newtheorem{exam}[thm]{Example}
 \numberwithin{equation}{section}
\newcommand{\T}{ \textbf{t}_{\lambda}}
\newcommand{\PT}{\mathfrak{U}\widehat{\otimes} \mathfrak{U}}

\newcommand{\uu}{\mathfrak{U}}
\begin{document}

\title[Symmetrically pseudo-amenable Banach algebras]
 {Symmetrically pseudo-amenable Banach algebras}

\author{Hoger Ghahramani and  Parvin Zamani}
\thanks{{\scriptsize
\hskip -0.4 true cm \emph{MSC(2020)}: 46H20; 46H25; 47B47;  46H99.
\newline \emph{Keywords}:symmetrically pseudo-amenable, pseudo-amenable, amenable, Jordan derivation, Lie derivation.  \\}}

\address{Department of Mathematics, Faculty of Science, University of Kurdistan, P.O. Box 416, Sanandaj, Kurdistan, Iran.}
\email{h.ghahramani@uok.ac.ir; hoger.ghahramani@yahoo.com}

\address{Department of Mathematics, Faculty of Science, University of Kurdistan, P.O. Box 416, Sanandaj, Kurdistan, Iran.}
\email{parvin.zamani88@gmail.com}


\address{}

\email{}

\thanks{}

\thanks{}

\subjclass{}

\keywords{}

\date{}

\dedicatory{}

\commby{}


\begin{abstract}
We introduce and study a new notion of amenability called symmetric pseudo-amenability. We obtain some properties of symmetrically pseudo-amenable Banach algebras and with examples, we compare this type of amenability with some other types of amenability. We also provide some special classes of symmetrically pseudo-amenable Banach algebras. Finally, Jordan derivations and Lie derivations from a class of Banach algebras into appropriate Banach bimodules are investigated using the notion of symmetric pseudo-amenability.
\end{abstract}

\maketitle
\section{Introduction }
B. E. Johnson studied cohomology of Banach algebras in \cite{johnson 1972} and defined the concept of amenable Banach algebra which was based on the amenability of locally compact groups and proved in \cite{johnson 1972 2} that a Banach algebra $\uu$ is amenable if and only if $\uu$ has a bounded approximate diagonal. In the following, many studies were conducted on amenability of Banach algebras, and various other types of amenability have been introduced and studied,  see \cite{dal, ghahramani 2004, khodakarami, runde} for a comprehensive survey of results of this type. In \cite{johnson 1996}, Johnson introduced symmetric amenable Banach algebras  as a special class of amenable Banach algebras. He called a Banach algebra $\uu$ is symmetrically amenable if it has a bounded approximate diagonal consisting of symmetric tensors, and then applied this concept to the study of Jordan derivations and Lie derivations. The study of Jordan derivations and Lie derivations also has a long history. The common problem regarding these mappings is when Jordan or Lie derivations can be characterized in terms of derivations? Many studies have been carried out in line with the this problem raised, and here we only refer to Johnson's results. Johnson showed in \cite[Theorem 6.2]{johnson 1996} that any continuous Jordan derivation from a symmetrically amenable Banach algebra $\uu$ into a Banach $\uu$-bimodule is a derivation. As a consequence he obtained the same result for continuous Jordan derivations on arbitrary $C^{*}$-algebras, although not every $C^{*}$-algebra is symmetrically amenable. Also, in \cite[Theorem 9.2]{johnson 1996} Johnson showed that any continuous Lie derivation from a symmetrically amenable Banach algebra $\uu$ into a Banach $\uu$-bimodule decomposed into the sum of a continuous derivation and a continuous center-valued trace, and as a consequence he obtained the same result for continuous Lie derivations on arbitrary $C^{*}$-algebras. To see the historical course and other results in this study path, we refer to \cite{alaminos0, alaminos, beh, cus, gh0, gh1, her, li, mar, math, mie, per, sinc} and the references therein. 
\par 
As a generalization of amenability, F. Ghahramani and Y. Zhang in \cite{ghahramani 2007} introduced and studied the notion of pseudo-amenability, which is
based on existence of an approximate diagonal for Banach algebras. Precisely, a Banach algebra $\uu$ is called pseudo-amenable if it has an approximate diagonal which is not necessarily bounded. According to this definition and with the idea of the definition of symmetric amenability, in the continuation of studies related to amenability, in this article we introduce the concept of symmetric pseudo-amenability. We say Banach algebra $\uu$ is \textit{symmetrically pseudo-amenable} if it has an approximate diagonal consisting of symmetric tensors which is not necessarily bounded. This concept is a generalization of symmetrically amenable Banach algebras, which is a special class of pseudo-amenable Banach algebras. In this article, we study symmetrically pseudo-amenable Banach algebras and identify some of their properties and compare this type of amenability with some other types of amenability with examples, and we also present some special classes of Banach algebras that are symmetrically pseudo-amenable. According to \cite[Theorem 6.2]{johnson 1996} and \cite[Theorem 9.2]{johnson 1996}, the question arises whether it is possible to obtain Johnson's results for Jordan derivations and Lie derivations of other suitable Banach algebras into suitable Banach modules? We give answers to this question using the concept of symmetric pseudo-amenability and obtain generalizations of \cite[Theorem 6.2]{johnson 1996} and \cite[Theorem 9.2]{johnson 1996}.
\par 
This article is organized as follows. In section 2, definitions, notions and required tools are introduced. Section 3 is devoted to the study of properties of symmetrically pseudo-amenable Banach algebras and examples. In section 4, we present some special classes of symmetrically pseudo-amenable Banach algebras. In sections 5 and 6, respectively, Jordan and Lie derivations from a class of Banach algebras into appropriate Banach bimodules are investigated using the concept of symmetric pseudo-amenability.
\section{Preliminaries}
In this section we fix the notation, and give some basic definitions and points which will be used in the next sections. Let $\uu$ be an algebra and $X$ be a $\uu$-bimodule. Note that $\mathcal{Z}_{\uu}(X)$ represents the center of $X$, which is defined as 
\[ \mathcal{Z}_{\uu}(X)= \lbrace x\in X \, \vert \, ax=xa \, \,  \text{for all}\, \, a\in \uu\rbrace .\] 
A mapping $f:\uu\rightarrow X$ is central if $f(\uu)\subseteq \mathcal{Z}_{\uu}(X)$.
\par 
Recall that a linear mapping $\delta : \uu \rightarrow X$ is called a derivation if 
\[ \delta(ab)=\delta(a)b+a\delta(b);\]
a Jordan derivation if 
\[ \delta(ab+ba)=\delta(a) b+a\delta(b)+\delta(b) a+b\delta(a) ;\]
and a Lie derivation if 
\[ \delta(ab-ba)=\delta(a)b+a\delta(b)-\delta(b)a-b\delta(a),\]
for all $a,b\in \uu$. A linear mapping $\tau: \uu\rightarrow X$ is a trace, if $\tau([a,b])=0$ for all $a,b\in \uu$, where $[a,b]=ab-ba$ (Lie product). Note that a Lie derivation is central if and only if it is a central trace. 
\par 
Assume that $\uu$ is a Banach algebra and $X$ is a Banach $\uu$-bimodule. The space of all bounded derivations from $\uu$ into $X$ is denoted by $Z^1(\uu,X)$. A derivation $\delta$ is called inner derivation, if there is $x\in X$ such that $\delta(a)=ax-xa$ for all $a\in \uu$. Each inner derivation is bounded and $N^1(\uu,X)$ is the space of all inner derivations from $\uu$ into $X$. The first cohomology group of $\uu$ with coefficient in $X$ is the quotient space $H^1(\uu,X)=Z^1(\uu,X)/N^1(\uu,X)$. We observe that $X^*$ is a Banach $\uu$-bimodule with the following module operations
\[\langle a f, x \rangle=\langle f,  xa \rangle,\quad \langle f a, x \rangle=\langle f, a x \rangle\]
for $a\in  \uu$, $ x\in  X$ and $ f\in X^*$. We call $X^*$ the dual bimodule of $X$. Recall that a Banach algebra $\uu$ is said to be amenable if for every Banach $\uu$-bimodule $X$, we have $H^1(\uu,X^*)=\lbrace 0\rbrace$. The notion of an amenable Banach algebra was introduced by Johnson in 1972 \cite{johnson 1972}. A net $\lbrace \T\rbrace_{\lambda\in\Lambda}$ in the projective tensor product $\PT$ is called approximate diagonal if satisfies the following two conditions
\begin{enumerate}
\item[$(1)$] $ a\T -\T a  \longrightarrow 0 $;
\item[$(2)$] $ \pi (\T)a \longrightarrow a $
\end{enumerate}
for all $a\in \uu$, where the operations on $\PT$ are defined through 
\[a(b\otimes c)=ab\otimes c, \quad (b\otimes c)a=b\otimes ca  \]
for all $a,b,c\in \uu$. Here and in the sequel $\pi$ always denotes the product morphism from $\PT$ into $\uu$, specified by $\pi(a\otimes b)=ab$ for all $a,b\in \uu$. In \cite{johnson 1972 2} (see also \cite[2.9.65]{dal}), it is proved that a Banach algebra $\uu$ is amenable if and only if $\uu$ has a bounded approximate diagonal. The flip map on $\PT$ is defined by 
\[ (a\otimes b)^{\circ}=b\otimes a \]
for $a,b \in \uu$ and an element $\textbf{t}\in \PT$ is called symmetric if $\textbf{t}^{\circ}=\textbf{t}$. Johnson in \cite{johnson 1996} is introduced symmetric amenability of Banach algebras. He called a Banach algebra $\uu$ is symmetrically amenable if it has a bounded approximate diagonal consisting of symmetric tensors. The opposite algebra $\uu^{\circ}$ is the Banach space $\uu$ with product $a\circ b = ba$. An approximate diagonal in $\PT$ for $\uu^{\circ}$ is a net $\lbrace \T\rbrace_{\lambda\in\Lambda}$ in $\PT$ if it meets the following two conditions
\begin{enumerate}
\item[$(1)^{\circ}$] $ a \circ \T -\T \circ a \rightarrow 0 $;
\item[$(2)^{\circ}$] $ a \pi ^{\circ}(\T) - a\rightarrow 0$
\end{enumerate}
for all $a\in \uu$, where
\[a\circ(b\otimes c)=b\otimes ac, \quad (b\otimes c)\circ a=ba\otimes c \quad  \text{and} \quad \pi^{\circ}(b\otimes c)=cb \]
for all $a,b,c\in \uu$. There are a number of obvious relationships between these operations, for example $a\circ \textbf{t}^{\circ}=(a\textbf{t})^{\circ}$ ($a\in\uu, \textbf{t}\in \PT$). The Banach algebra $\uu$ is symmetrically amenable if and only if there is a bounded net $\lbrace \T\rbrace_{\lambda\in\Lambda}$ in $\PT$ such that satisfies $(1)$, $(2)$, $(1)^{\circ}$ and $(2)^{\circ}$ (\cite[Proposition 2.2]{johnson 1996}). The properties and examples of symmetrically amenable Banach algebras can be found in \cite{johnson 1996}. 
\par 
In \cite{ghahramani 2004}, the authors have introduced and studied a generalization of amenability called approximate amenability, which is based on a property of
derivations from the algebra. Precisely, a Banach algebra $\uu$ is approximately amenable if, for every Banach $\uu$-bimodule $X$, every bounded derivation $d$ from $\uu$ into the dual bimodule $X^*$ is approximately inner, which means that there is a net $\lbrace x_{\lambda}\rbrace_{\lambda\in\Lambda}\subset X^*$ such that $d(a)=\lim _{\lambda}(ax_{\lambda}-x_{\lambda}a)$ for all $a\in \uu$. In \cite{ghahramani 2007} pseudo-amenability is presented as another generalization of amenability, which is based on approximate diagonals. Precisely, a Banach algebra $\uu$ is pseudo-amenable if it has an approximate diagonal which is not necessarily bounded. In \cite[Theorem 3.1]{ghahramani 2007} it is proved that a Banach algebra $\uu$ is approximately amenable if and only if the unitization $\uu^{\sharp}$ of $\uu$ is pseudo-amenable (also, see \cite{ghahramani 2008}). The properties and examples of these kinds of amenabilities can be found in \cite{ghahramani 2004, ghahramani 2007, ghahramani 2008}.
\section{Basic properties and comparisons}
In this section, we present definition and some basic properties of symmetric pseudo-amenability, and with examples, we compare this type of amenability with some other types. Throughout this section, $\uu$ is a Banach algebra. 
\begin{defn}
The Banach algebra $\uu$ is \textit{symmetrically pseudo-amenable} if it has an approximate diagonal consisting of symmetric elements. 
\end{defn}
In view of this definition, it is clear that a symmetrically amenable Banach algebras is symmetrically pseudo-amenable and a symmetrically pseudo-amenable Banach algebra is pseudo-amenable. In the next proposition, a condition equivalent to the symmetric pseudo-amenablity is presented, the proof of which is similar to \cite[Proposition 2.2]{johnson 1996}, and its proof is omitted.
\begin{prop}\label{equivalent symmetric}
$ \uu $ is symmetrically pseudo-amenable if and only if there exists a net $ \{\T\}_{\lambda\in\Lambda} $ in $ \PT $ which satisfies
\begin{enumerate}
\item[(i)] $ a \T -\T a  \longrightarrow 0 $;
\item[(ii)] $ \pi (\T)a \longrightarrow a $;
\item[(iii)]$a\circ\T-\T\circ a\longrightarrow 0 $;
\item[(iv)]$ a\pi^{\circ} (\T)\longrightarrow a$.
\end{enumerate}
for all $a\in\uu$.
\end{prop}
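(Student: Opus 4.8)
The plan is to mimic the proof of \cite[Proposition 2.2]{johnson 1996}, the only new wrinkle being that we must work with nets rather than bounded nets, so no passage to a weak-* limit or Banach-limit averaging is available; everything has to be done by an explicit symmetrization construction at the level of the net. The forward implication is trivial: if $\uu$ is symmetrically pseudo-amenable it has an approximate diagonal $\{\T\}$ with $\T^{\circ}=\T$, so (i) and (ii) hold by definition; and since $a\circ\T-\T\circ a = (\T a - a\T)^{\circ}$ and $a\pi^{\circ}(\T)-a = (\pi(\T)a-a)$ — here one uses $\pi^{\circ}(\textbf{t})=\pi(\textbf{t}^{\circ})$ together with $\T^{\circ}=\T$, and the identity $a\circ\textbf{t}^{\circ}=(a\textbf{t})^{\circ}$ recorded in the preliminaries — conditions (iii) and (iv) are literally the images of (i) and (ii) under the isometric flip map, hence also converge to $0$. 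So (i)--(iv) hold.

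For the converse, suppose $\{\T\}_{\lambda\in\Lambda}$ satisfies (i)--(iv). First I would symmetrize: set $\textbf{s}_{\lambda}=\tfrac12(\T+\T^{\circ})$, which is symmetric by construction. I would then check the two approximate-diagonal conditions for $\{\textbf{s}_{\lambda}\}$. For $a\textbf{s}_{\lambda}-\textbf{s}_{\lambda}a$, split it as $\tfrac12(a\T-\T a)+\tfrac12(a\T^{\circ}-\T^{\circ}a)$; the first term tends to $0$ by (i), and the second equals $\tfrac12\big((\T a^{\,})^{\circ}-(a\T)^{\circ}\big)$-type expression which, using $a\textbf{t}^{\circ}=(\textbf{t}\circ a)^{\circ}$ and $\textbf{t}^{\circ}a=(a\circ\textbf{t})^{\circ}$ (the adjoint module identities for the flip), equals the flip of $\tfrac12(\T\circ a - a\circ\T)\to 0$ by (iii). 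Hence $a\textbf{s}_{\lambda}-\textbf{s}_{\lambda}a\to 0$. Similarly $\pi(\textbf{s}_{\lambda})a-a=\tfrac12(\pi(\T)a-a)+\tfrac12(\pi(\T^{\circ})a-a)=\tfrac12(\pi(\T)a-a)+\tfrac12(a\pi^{\circ}(\T)-a)^{\!}$, where I have used $\pi(\T^{\circ})=\pi^{\circ}(\T)$; wait, one must be careful that $\pi(\T^{\circ})a$ versus $a\pi^{\circ}(\T)$ agree in the limit — in fact $\pi(\textbf{t}^{\circ})=\pi^{\circ}(\textbf{t})$ only up to the order of the product, so the correct bookkeeping is $\pi(\T^{\circ})=\pi^{\circ}(\T)$ and then condition (iv) gives $a\pi^{\circ}(\T)\to a$, while (ii) gives $\pi(\T)a\to a$; combining, $\pi(\textbf{s}_{\lambda})a\to a$ provided the module actions are consistently chosen. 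This is exactly where I expect the main (bookkeeping) obstacle: keeping the four flip/opposite identities $a\circ\textbf{t}^{\circ}=(a\textbf{t})^{\circ}$, $\textbf{t}^{\circ}\circ a=(\textbf{t}a)^{\circ}$, $\pi^{\circ}(\textbf{t})=\pi(\textbf{t}^{\circ})$ straight, so that each of (i)--(iv) is used exactly once and lands on the right half of the symmetrized expression.

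Once $\{\textbf{s}_{\lambda}\}$ is verified to be an approximate diagonal consisting of symmetric tensors, we are done: by Definition (symmetric pseudo-amenability) $\uu$ is symmetrically pseudo-amenable. I would present the write-up in two short paragraphs — the trivial direction in one or two lines, and the symmetrization argument in a displayed computation for $a\textbf{s}_{\lambda}-\textbf{s}_{\lambda}a$ and another for $\pi(\textbf{s}_{\lambda})a-a$, invoking (i)--(iv) and the flip identities at the marked spots — noting at the end that no boundedness of $\{\T\}$ was used anywhere, which is precisely why the argument survives the passage from symmetric amenability to symmetric pseudo-amenability. Since the paper itself says the proof is ``similar to \cite[Proposition 2.2]{johnson 1996}'' and omits it, a brief indication of this symmetrization step is all that is needed.
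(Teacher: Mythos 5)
Your symmetrization argument is the right one, and it is essentially the proof the paper intends (the argument of Johnson's Proposition~2.2, which nowhere uses boundedness): the forward direction comes from flipping (i) and (ii), and the converse from passing to $\textbf{s}_{\lambda}=\tfrac12(\T+\T^{\circ})$. However, two of the bookkeeping points you yourself flagged do need repair before the write-up closes. First, your flip identities are swapped: with the paper's conventions one has $a\textbf{t}^{\circ}=(a\circ\textbf{t})^{\circ}$ and $\textbf{t}^{\circ}a=(\textbf{t}\circ a)^{\circ}$ (check on $\textbf{t}=b\otimes c$: $a(c\otimes b)=ac\otimes b=(b\otimes ac)^{\circ}=(a\circ(b\otimes c))^{\circ}$). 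This only flips a sign inside a term that tends to $0$, so $a\textbf{s}_{\lambda}-\textbf{s}_{\lambda}a\to 0$ still follows from (i) and (iii).

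Second, the point you leave as ``provided the module actions are consistently chosen'' is the one genuinely missing step, and the same issue makes your forward-direction identity $a\pi^{\circ}(\T)-a=\pi(\T)a-a$ false as written: condition (iv) controls $a\pi^{\circ}(\T)$, whereas $\pi(\textbf{s}_{\lambda})a$ involves $\pi(\T^{\circ})a=\pi^{\circ}(\T)a$. The repair is exactly the remark the paper records immediately after the proposition: applying the bounded maps $\pi$ and $\pi^{\circ}$ to (i) and (iii) gives $a\pi(\T)-\pi(\T)a\to 0$ and $a\pi^{\circ}(\T)-\pi^{\circ}(\T)a\to 0$, so (i) together with (ii) yields $a\pi(\T)\to a$ (which is what the forward direction actually needs to obtain (iv)), and (iii) together with (iv) yields $\pi^{\circ}(\T)a\to a$, whence $\pi(\textbf{s}_{\lambda})a=\tfrac12\pi(\T)a+\tfrac12\pi^{\circ}(\T)a\to a$. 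With these two lines inserted, your proof is complete and coincides with the intended one.
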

It should be noted that if ${\T} $ satisfies in conditions (i) to (iv) of the Proposition \ref{equivalent symmetric}, then for each $a\in\uu$, $ a\pi(\T)\longrightarrow a $ can be concluded from (i) and (ii), and $ \pi^{\circ}(\T)a\longrightarrow a $ can be concluded from (iii) and (iv).
 \par
We know that in order to show that the Banach algebra $\uu$ is pseudo-amenable, it is sufficient  to show that for each finite subset $F$ of $\uu$ and each $\varepsilon > 0$ there is an element $\textbf{t}$ of $\PT$ with $\Vert a\textbf{t}-\textbf{t}a \Vert <\varepsilon$ and $\Vert \pi(\textbf{t})a-a \Vert <\varepsilon$ for all $a\in F$. Now according to the definition of symmetric pseudo-amenability to show that $\uu$ is symmetrically pseudo-amenable, it is sufficient to show that for each finite subset $F$ of $\uu$ and each $\varepsilon > 0$ there is a symmetric element $\textbf{t}$ of $\PT$ such that $\Vert a\textbf{t}-\textbf{t}a \Vert <\varepsilon$ and $\Vert \pi(\textbf{t})a-a \Vert <\varepsilon$ for all $a\in F$. Also, according to Proposition \ref{equivalent symmetric}, in order to prove that $\uu$ is symmetrically pseudo-amenable, it is sufficient to show that for each finite subset $F$ of $\uu$ and each $\varepsilon > 0$ there is an element $\textbf{t}$ of $\PT$ with $\Vert a\textbf{t}-\textbf{t}a \Vert <\varepsilon$, $\Vert a\circ\textbf{t}-\textbf{t}\circ a \Vert <\varepsilon$, $\Vert \pi(\textbf{t})a-a \Vert <\varepsilon$ and $\Vert a \pi^{\circ}(\textbf{t})-a \Vert <\varepsilon$ for all $a\in F$.
\par 
As an application of Proposition \ref{equivalent symmetric}, we have the following result.
\begin{cor}\label{symmetric commutative}
If $\uu $ is a commutative pseudo-amenable Banach algebra, then $\uu $ is symmetrically pseudo-amenable.
\end{cor}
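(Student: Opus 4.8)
The plan is to read off the result directly from Proposition \ref{equivalent symmetric}. Since $\uu$ is pseudo-amenable, I would start by fixing an approximate diagonal $\{\T\}_{\lambda\in\Lambda}$ in $\PT$, so that conditions (i) $a\T-\T a\to 0$ and (ii) $\pi(\T)a\to a$ of Proposition \ref{equivalent symmetric} already hold for every $a\in\uu$. It then suffices to check that commutativity of $\uu$ forces this same net to satisfy conditions (iii) and (iv) as well; once that is done, Proposition \ref{equivalent symmetric} gives the conclusion immediately.

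The key step is to record how the opposite module structure on $\PT$ degenerates when $\uu$ is commutative. On an elementary tensor $b\otimes c$ one has $a\circ(b\otimes c)=b\otimes ac=b\otimes ca=(b\otimes c)a$ and, symmetrically, $(b\otimes c)\circ a=ba\otimes c=ab\otimes c=a(b\otimes c)$, using $ac=ca$ and $ab=ba$; likewise $\pi^{\circ}(b\otimes c)=cb=bc=\pi(b\otimes c)$. Because the module operations and the maps $\pi,\pi^{\circ}$ are bounded and (bi)linear, and elementary tensors span a dense subspace of $\PT$, these identities extend to $a\circ\textbf{t}=\textbf{t}a$, $\textbf{t}\circ a=a\textbf{t}$ and $\pi^{\circ}(\textbf{t})=\pi(\textbf{t})$ for all $a\in\uu$ and all $\textbf{t}\in\PT$.

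With these identities in hand I would argue as follows. Condition (iii) for $\{\T\}$ reads $a\circ\T-\T\circ a=\T a-a\T=-(a\T-\T a)$, which tends to $0$ by (i). Condition (iv) reads $a\pi^{\circ}(\T)=a\pi(\T)$, so it remains to see that $a\pi(\T)\to a$: applying the product morphism $\pi$ to (i) and using $\pi(a\textbf{t})=a\pi(\textbf{t})$ and $\pi(\textbf{t}a)=\pi(\textbf{t})a$ gives $a\pi(\T)-\pi(\T)a\to 0$, and combining this with (ii) yields $a\pi(\T)\to a$. Hence $\{\T\}_{\lambda\in\Lambda}$ satisfies (i)--(iv), and Proposition \ref{equivalent symmetric} shows that $\uu$ is symmetrically pseudo-amenable.

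I do not expect a genuine obstacle here; the only point requiring a little care is the passage of the operator identities from elementary tensors to all of $\PT$, which is routine once one observes that all the maps involved are bounded. If one prefers to produce an explicitly symmetric approximate diagonal rather than invoking Proposition \ref{equivalent symmetric}, one may instead check that the symmetrized net $\tfrac{1}{2}(\T+\T^{\circ})$ is again an approximate diagonal (now consisting of symmetric tensors), but the argument above is shorter.
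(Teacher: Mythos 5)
Your proof is correct and follows essentially the same route as the paper, which simply observes that for commutative $\uu$ the opposite-module operations on $\PT$ coincide with the original ones, so that conditions (i)/(iii) and (ii)/(iv) of Proposition \ref{equivalent symmetric} are the same; you merely spell out the density argument and derive $a\pi(\T)\to a$ slightly more indirectly than needed (commutativity gives $a\pi(\T)=\pi(\T)a\to a$ at once).
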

\begin{proof}
For commutative Banach algebras, in Proposition \ref{equivalent symmetric}, conditions (i) and (iii), and conditions (ii) and (iv)  are the same.
\end{proof}
In the previous corollary, we saw that pseudo-amenability is equivalent to symmetric pseudo-amenability on commutative Banach algebras. By using the next proposition, we can obtain an example of Banach algebras that are pseudo-amenable, but not symmetrically pseudo-amenable, and therefore, these classes of Banach algebras are different in general. The idea of the next result comes from \cite[Proposition 2.4]{johnson 1996}.
\begin{prop}\label{non symmetric}
Let $ \uu $ be a symmetrically pseudo-amenable Banach algebra, and $ z \neq 0$ be an element of $ \mathcal{Z}(\uu)$. Then there is a net $\{f_{\lambda}\}_{\lambda\in\Lambda} $ in $ \uu^{\ast} $ such that 
$ f_{\lambda}(ab-ba)\longrightarrow 0 $ for every $ a,b\in\uu $ and $ f_{\lambda}(z)\longrightarrow 1 $. Especially, if $ \uu $ is unital, then for $ z=1 $ there exists a net $ \{f_{\lambda}\}_{\lambda\in\Lambda} $ in $ \uu^{\ast} $ such that 
$ f_{\lambda}(ab-ba)\longrightarrow 0 $ for every $ a,b\in\uu $ and $ f_{\lambda}(1)\longrightarrow 1 $.
\end{prop}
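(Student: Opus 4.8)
The plan is to build the functionals $f_\lambda$ directly from a symmetric approximate diagonal $\{\T\}_{\lambda\in\Lambda}$ for $\uu$, mimicking \cite[Proposition 2.4]{johnson 1996} but without any boundedness. First I would fix a norm-one functional $\phi\in\uu^\ast$ with $\phi(z)=1$, which exists because $z\neq 0$; the idea is to define $f_\lambda\in\uu^\ast$ by composing $\phi$ with a ``slice'' of $\T$. Concretely, writing $\T=\sum_n a_n\otimes b_n$, one sets $f_\lambda(a)=\sum_n \phi(a_n)\,\phi(b_n a)$ or, more invariantly, $f_\lambda(a)=\langle \phi\otimes\phi,\, \T\cdot a\rangle$ using the identification of $\PT$ acting on the right; the symmetry $\T^{\circ}=\T$ will be what forces the trace-like behaviour.

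The key computation is that for $a,b\in\uu$,
\[
f_\lambda(ab-ba)=\langle\phi\otimes\phi,\ \T\cdot(ab)-\T\cdot(ba)\rangle,
\]
and using the module structure on $\PT$ together with $b\T-\T b\to 0$ (condition (i)) and the analogous $\circ$-condition (iii), one rewrites $\T\cdot(ab)$ and $\T\cdot(ba)$ so that their difference is a sum of commutator terms $b\T-\T b$, $a\circ\T-\T\circ a$, etc., each tending to $0$. Here the symmetry of $\T$ is essential: it lets one convert a left-module commutator into a right-module ($\circ$-)commutator, so that the ``extra'' term produced when pulling $b$ past $\T$ is itself one of the quantities known to vanish. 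I would carry this out by expanding $\langle\phi\otimes\phi,\T a b\rangle-\langle\phi\otimes\phi,\T b a\rangle$ into a telescoping chain that passes through $\langle\phi\otimes\phi,(b\T)a\rangle$ or $\langle\phi\otimes\phi,(\T^{\circ}b)^{\circ}a\rangle$, and bounding each consecutive difference by $\|\phi\|^2$ times one of $\|b\T-\T b\|$, $\|a\T-\T a\|$, $\|a\circ\T-\T\circ a\|$.

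For the normalization, since $z\in\mathcal Z(\uu)$ I would use condition (ii), $\pi(\T)a\to a$, evaluated appropriately: $f_\lambda(z)$ should come out close to $\phi(\pi(\T)\cdot(\text{something involving }z))$, and choosing $\phi$ so that $\phi(z)=1$ together with $\pi(\T)z\to z$ gives $f_\lambda(z)\to\phi(z)=1$. If the naive slice does not land exactly on $\pi(\T)z$, I would instead define $f_\lambda(a)=\phi(\pi(\T)u)$-type expressions or pass through $\pi^{\circ}$ using (iv); the point is that one of the four convergences in Proposition \ref{equivalent symmetric} yields $f_\lambda(z)\to 1$ while the commutator estimate above handles the trace property. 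The unital case $z=1$ is then immediate.

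The main obstacle I anticipate is the absence of a uniform bound on $\|\T\|$: in Johnson's bounded setting the error terms are controlled by $\sup_\lambda\|\T\|$ times the vanishing quantities, but here $\|\T\|$ may blow up. The resolution is that every error term in the chain above is of the form $\|\phi\|^2\|(\text{a commutator of }\T\text{ with }a\text{ or }b)\|$, and these commutators tend to $0$ \emph{by hypothesis} regardless of $\|\T\|$ — so no boundedness is needed, exactly as pseudo-amenability arguments typically go through. I would just need to be careful that the slicing functional $\phi\otimes\phi$ (of norm $\le\|\phi\|^2=1$) is applied only to such commutator terms and to $\pi(\T)z-z$, never to $\T$ itself.
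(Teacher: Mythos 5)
Your overall strategy --- adapt Johnson's Proposition 2.4 using conditions (i)--(iv) of Proposition \ref{equivalent symmetric}, and observe that no bound on $\Vert\T\Vert$ is needed because every error term is a fixed constant times one of the vanishing commutators --- is the right one, and it is exactly the spirit of the paper's proof. But the functional you actually construct does not work. The slice $f_\lambda(a)=\sum_n\phi(a_n)\phi(b_n a)=\langle\phi\otimes\phi,\T a\rangle$ has no cyclic (trace-like) structure, and no telescoping chain through the commutators $a\T-\T a$ and $a\circ\T-\T\circ a$ can give it one. Concretely, take $\uu=M_n(\mathbb{C})$ with its exact symmetric diagonal $\textbf{t}=\frac{1}{n}\sum_{i,j}E_{ij}\otimes E_{ji}$ and $\phi(X)=X_{11}$: all four quantities in Proposition \ref{equivalent symmetric} vanish identically, yet $\langle\phi\otimes\phi,\textbf{t}A\rangle=\frac{1}{n}A_{11}$, which is not a trace (for $A=E_{12}$, $B=E_{21}$ it takes the value $\frac{1}{n}\neq 0$ on $AB-BA$) and sends $I$ to $\frac{1}{n}$ rather than to $\phi(I)=1$. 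Since your proposed estimates use only the symmetry of $\T$ and those (here exactly zero) commutator norms, any such chain would prove exact traciality of this non-tracial functional; so the step ``convert a left-module commutator into a $\circ$-commutator so the differences telescope'' cannot be completed. (A smaller point: a norm-one $\phi$ with $\phi(z)=1$ need not exist when $\Vert z\Vert<1$; any bounded $\phi$ with $\phi(z)=1$ suffices, and the constant $\Vert\phi\Vert^{2}$ is harmless.)

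What is needed --- and what the paper does --- is to wrap the slice around the product map rather than split it: define $f_\lambda(a)=g(\pi^{\circ}(a\T))=\sum_n g(b_n a a_n)$ for a single $g\in\uu^{*}$ with $g(z)=1$. The key is the exact identity $\pi^{\circ}(ab\,\T)=\pi^{\circ}(b\T a)$, which gives $f_\lambda(ab-ba)=g(\pi^{\circ}(b(\T a-a\T)))\rightarrow 0$ using only condition (i) and the bound $\Vert\pi^{\circ}(b\,\textbf{s})\Vert\leq\Vert b\Vert\,\Vert\textbf{s}\Vert$; for the normalization, centrality of $z$ yields $\pi^{\circ}(z\T)=\pi^{\circ}(\T)z\rightarrow z$ by (iii)--(iv), whence $f_\lambda(z)\rightarrow g(z)=1$. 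Your closing remark that one might ``pass through $\pi^{\circ}$'' gestures toward this, but as written the proof rests on the $\phi\otimes\phi$ slice, and that is a genuine gap.
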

\begin{proof}
Suppose that $ \{\T\}_{\lambda\in\Lambda} $ is a net in $\PT $ that satisfies to conditions (i) to (iv) of Proposition \ref{equivalent symmetric}, and 
$ g\in\uu^{*} $ is such that $ g(z)=1 $. For each $ \lambda\in\Lambda $ define
 \[f_{\lambda}(a)=g(\pi^{\circ}(a\T))\quad (a\in\uu).\]
Each $ f_{\lambda} $ is a bounded linear functional on $ \uu $. For every 
 $ \lambda\in\Lambda $ and $ a,b\in\uu $ we have 
 $ \pi^{\circ}(ab\T)=\pi^{\circ}(b\T a) $, and hence
  \begin{equation*}
\begin{split}
f_{\lambda}(ab-ba)&=g(\pi^{\circ}(ab\T -ba \T))\\&
=g(\pi^{\circ}(b\T a-ba\T))\\&
=g(\pi^{\circ}(b(\T a-a\T)))\longrightarrow 0
\end{split}
\end{equation*}
for each $ a,b\in\uu $. Also, since $ z\in\mathcal{Z}(\uu) $, we have
 \begin{equation*}
\pi^{\circ}(z\T)=\pi^{\circ}(\T\circ z)=\pi^{\circ}(\T)z\longrightarrow z
\end{equation*}
Now, it follows from $ g\in\uu^{*} $ that
\[ f_{\lambda}(z)\longrightarrow g(z)=1. \]
\end{proof}

In the next example, we present a Banach algebra, which is pseudo-amenable but not symmetrically pseudo-amenable.
\begin{exam}\label{not spa}
Let $ O_{n} $ be the Cuntz algebra, which is a unital amenable $ C^{*} $-algebra, and hence it is pseudo-amenable. There are members $ T_{1},...T_{n} $ in $ O_{n} $ such that $ T_{i}^{*}T_{i}=I $ and
 $ \sum_{i=1} ^{n}T_{i}T_{i}^{*}=I$. Suppose that $ O_{n} $ for $ n> 1 $ is symmetrically pseudo-amenable. According to Proposition \ref{non symmetric}, there exists a net $\{ f_{\lambda}\}_{\lambda\in\Lambda} $ of bounded linear functionals on $ O_{n} $ such that
 \[ f_{\lambda}(AB-BA)\longrightarrow0 \]
 for each $ A,B\in O_{n} $ and
 \[f_{\lambda}(I)\longrightarrow 1.\]
 Therefore
 \begin{equation*}
1=\lim_{\lambda}f_{\lambda}(I)=\lim_{\lambda}f_{\lambda}(\sum_{i=1}^{n}T_{i}T^{*}_{i})=\lim_{\lambda}\sum_{i=1}^{n}f_{\lambda}(T_{i}T^{*}_{i})
\end{equation*}  
On the other hand
\begin{equation*}
\lim_{\lambda}\sum_{i=1}^{n} f_{\lambda}(T_{i}T^{*}_{i})-\lim_{\lambda}\sum_{i=1}^{n}f_{\lambda}(T^{*}_{i}T_{i})
=\lim_{\lambda}\sum_{i=1}^{n}f_{\lambda}(T_{i}T^{*}_{i}-T^{*}_{i}T_{i})\longrightarrow 0
\end{equation*} 
So $ \lim_{\lambda}\sum_{i=1}^{n}f_{\lambda}(T^{*}_{i}T_{i})=1 $. but
\[ \lim_{\lambda}\sum_{i=1}^{n}f_{\lambda}(T^{*}_{i}T_{i})=\sum_{i=1}^{n}\lim_{\lambda}f_{\lambda}(T^{*}_{i}T_{i})=\sum_{i=1}^{n}\lim_{\lambda}f_{\lambda}(I)=n\]
So $ n=1 $, which contradicts $ n>1 $. Therefore, $ O_{n} $ for $ n> 1 $ cannot be a symmetrically pseudo-amenable Banach algebra.
\end{exam}
In fact, this example presents a Banach algebra that is amenable but not symmetrically pseudo-amenable. By using the next proposition, we can give an example of a symmetrically pseudo-amenable Banach algebra that is not amenable. To express this proposition, we first introduce some concepts.
\par 
Let $ \{X_{i}:i\in I\} $ be a collection of Banach spaces. Denote by $ \prod_{i\in I}X_{i} $ the product space of the collection. This is the space consisting of all mappings $ x:I\longrightarrow\bigcup_{i\in I} X_{i} $ for which $ x(i)\in X_{i} $, the linear operations being given coordinatewise. For $ 1\leq p<\infty$, we recall that the $\ell_{p}$-direct sum of the collection is
  \[\bigoplus_{i\in I}^{p}X_{i}=\bigg\{x\in\prod_{i\in I} X_{i}\, : \,\Vert x\Vert_{p}=\bigg (\sum_{i\in I}\Vert x(i)\Vert^{p}\bigg )^{\dfrac{1}{p}}<\infty \bigg\},\]
 and the $ c_{0} $-direct sum of the collection is
  \[\bigoplus_{i\in I}^{0}X_{i}=\bigg\{x\in\prod_{i\in I}X_{i}\, :\, \Vert x\Vert_{\infty}=max\Vert x(i)\Vert<\infty \, \text{and} \, \lim_{i}x(i)=0\bigg\}.\]
 For a collection $\lbrace \uu_{i}:i\in I\rbrace $ of Banach algebras, the sum $ \bigoplus_{i\in I}^{p}\uu_i$, $p\geqslant 1$ or $p=0 $, is also a Banach algebra with the multiplication being defined coordinatewise. If $ J\subset I $, then $ \bigoplus_{i\in J}^{p}\uu_{i} $ can be identified with the complemented closed ideal of $ \bigoplus_{i\in I} ^{p}\uu_{i}$ consisting of all $ x $ with $ x(i)=0$ for $ i\notin J $. We let $ P_{J} $ denote the associated projection from $ \bigoplus_{i\in I}^{p} \uu_{i} $ onto  $\bigoplus_{i\in J}^{p}\uu_{i} $. It should be noted that for $i_0\in I$, if $ \rho_{i_0}:\uu_{i_0}\longrightarrow\bigoplus_{i\in I}^{p}\uu_{i} $ is the natural embedding map, then 
  $\tilde{\rho}_{i_0}:\uu_{i_0}\widehat{\otimes}\uu_{i_0}\longrightarrow (\bigoplus_{i\in I}^{p}\uu_{i})\widehat{\otimes}(\bigoplus_{i\in I}^{p}\uu_{i})  $
 is given by
 $ \tilde{\rho}_{i_0}(\sum_{k=1}^{\infty}a_{k}^{i_0}\otimes b^{i_0}_{k})=\sum_{k=1}^{\infty}\rho_{i_0}(a^{i_0}_{k})\otimes \rho_{i_0}(b^{i_0}_{k}) $, where
 $ \sum_{k=1}^{\infty}a_{k}^{i_0}\otimes b_{k}^{i_0} \in \uu_{i_0}\widehat{\otimes}\uu_{i_0} $ is a bounded linear embedding with $ \Vert \tilde{\rho}\Vert\leq 1 $. Now we have the following proposition, the idea of proof of which is taken from \cite[Proposition 2.1]{ghahramani 2007}.
 \begin{prop}\label{directsum}
Suppose that for each $i\in I$, $ \uu_{i}$ is a symmetrically pseudo-amenable Banach algebra. Then so is $ \bigoplus_{i\in I}^{p}\uu_{i}$ for $ p\geq 1$ or $ p=0 $.
\end{prop}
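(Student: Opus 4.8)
The plan is to reduce the problem to a finite-approximation statement and then patch together symmetric approximate diagonals for finitely many coordinates. Fix $\mathfrak{A}=\bigoplus_{i\in I}^{p}\mathfrak{A}_{i}$. Using the remark following Proposition~\ref{equivalent symmetric}, it suffices to show that for every finite subset $F\subset\mathfrak{A}$ and every $\varepsilon>0$ there is a symmetric element $\textbf{t}\in\mathfrak{A}\widehat{\otimes}\mathfrak{A}$ with $\Vert a\textbf{t}-\textbf{t}a\Vert<\varepsilon$ and $\Vert\pi(\textbf{t})a-a\Vert<\varepsilon$ for all $a\in F$. First I would approximate: given $a\in F$, since finitely supported elements are dense in $\mathfrak{A}$ (for $p\geq 1$ and $p=0$), choose a finite set $J\subset I$ and elements $a'\in\bigoplus_{i\in J}^{p}\mathfrak{A}_{i}$ with $\Vert a-a'\Vert$ small for each $a\in F$; enlarging $J$, we may assume the same $J$ works for all of $F$. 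So it is enough to handle the case where every element of $F$ is supported in the fixed finite set $J=\{i_{1},\dots,i_{m}\}$.

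Next I would build the symmetric element coordinatewise. For each $i\in J$, write $a_{i}=P_{\{i\}}(a)$ for $a\in F$; the collection $\{a_{i}:a\in F\}$ is a finite subset of $\mathfrak{A}_{i}$. By symmetric pseudo-amenability of $\mathfrak{A}_{i}$, pick a symmetric $\textbf{t}_{i}\in\mathfrak{A}_{i}\widehat{\otimes}\mathfrak{A}_{i}$ with $\Vert a_{i}\textbf{t}_{i}-\textbf{t}_{i}a_{i}\Vert<\varepsilon/m$ and $\Vert\pi(\textbf{t}_{i})a_{i}-a_{i}\Vert<\varepsilon/m$ for all $a\in F$. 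Set $\textbf{t}=\sum_{i\in J}\tilde{\rho}_{i}(\textbf{t}_{i})\in\mathfrak{A}\widehat{\otimes}\mathfrak{A}$. Since each $\tilde{\rho}_{i}$ is a bimodule map compatible with $\pi$ (that is, $\pi\circ\tilde{\rho}_{i}=\rho_{i}\circ\pi$) and clearly carries symmetric tensors to symmetric tensors, $\textbf{t}$ is symmetric. The key computation is that the embeddings have orthogonal ranges in the sense that $\rho_{i}(\mathfrak{A}_{i})\cdot\rho_{j}(\mathfrak{A}_{j})=\{0\}$ for $i\neq j$; hence for $a\in F$ supported in $J$ one gets $a\textbf{t}-\textbf{t}a=\sum_{i\in J}\tilde{\rho}_{i}(a_{i}\textbf{t}_{i}-\textbf{t}_{i}a_{i})$ and $\pi(\textbf{t})a-a=\sum_{i\in J}\rho_{i}(\pi(\textbf{t}_{i})a_{i}-a_{i})$, and summing the norm estimates over the $m$ coordinates (using $\Vert\tilde{\rho}_{i}\Vert\leq 1$ and $\Vert\rho_{i}\Vert\leq 1$) gives the required bounds by $\varepsilon$. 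Combining with the initial approximation step (adjusting $\varepsilon$ by the standard triangle-inequality bookkeeping), this produces the desired symmetric element for the original $F$, and an appeal to the net formulation finishes the proof.

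The main obstacle is really bookkeeping rather than conceptual: one must be careful that the cross terms $\rho_{i}(a_{i})\textbf{t}_{j}$ vanish for $i\neq j$ (which follows from the coordinatewise multiplication) so that the norm of $a\textbf{t}-\textbf{t}a$ genuinely splits as a sum over $J$, and one must verify that a sum of $\ell_{p}$- (or $c_{0}$-) supported tensors does assemble into a well-defined element of the projective tensor product with controlled norm — this is exactly the content of the stated boundedness $\Vert\tilde{\rho}_{i}\Vert\leq 1$, so it is available. A secondary subtlety is passing from the finitely supported approximation back to general $a\in F$: one should absorb the error $\Vert a-a'\Vert$ into $\varepsilon$ at the start, replacing $\varepsilon$ by $\varepsilon/3$ and using $\Vert a\textbf{t}-\textbf{t}a\Vert\leq\Vert(a-a')\textbf{t}\Vert+\Vert a'\textbf{t}-\textbf{t}a'\Vert+\Vert\textbf{t}(a'-a)\Vert$, and similarly for the $\pi$ condition; this requires an a priori bound on $\Vert\textbf{t}\Vert$, which one arranges by noting the finite sum $\textbf{t}=\sum_{i\in J}\tilde{\rho}_{i}(\textbf{t}_{i})$ has norm at most $\sum_{i\in J}\Vert\textbf{t}_{i}\Vert$, a fixed finite quantity once the $\textbf{t}_{i}$ are chosen.
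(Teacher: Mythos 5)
Your construction of $\textbf{t}=\sum_{i\in J}\tilde{\rho}_{i}(\textbf{t}_{i})$ is exactly the paper's construction, and the coordinatewise-orthogonality computation is right; but the final bookkeeping step, where you pass from the truncation $a'$ back to a general $a\in F$, has a genuine circularity. You propose to bound $\Vert a\textbf{t}-\textbf{t}a\Vert$ by $\Vert(a-a')\textbf{t}\Vert+\Vert a'\textbf{t}-\textbf{t}a'\Vert+\Vert\textbf{t}(a'-a)\Vert$ and to control the outer terms by $\Vert a-a'\Vert\,\Vert\textbf{t}\Vert$, "absorbing the error into $\varepsilon$ at the start." That requires $\Vert a-a'\Vert<\varepsilon/(3\Vert\textbf{t}\Vert)$, but the order of choices forbids this: $J$ (hence the quality of the approximation $a'$) must be fixed first, and only then are the $\textbf{t}_{i}$, and hence $\Vert\textbf{t}\Vert$, chosen. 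Since the algebras are only pseudo-amenable, there is no a priori bound on $\Vert\textbf{t}_{i}\Vert$, so $\Vert\textbf{t}\Vert$ can be arbitrarily large; improving the approximation to compensate means enlarging $J$, which changes $\textbf{t}$ again, and nothing guarantees this process closes. (This is precisely the point where boundedness of the diagonal, i.e. symmetric amenability, would rescue the argument, but it is not available here.)

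The gap is fixable, and the fix is what the paper's proof actually does: take $a'=P_{J}(a)$ and observe that, because multiplication is coordinatewise and $\textbf{t}$ is supported in the coordinates $J$, the cross terms vanish identically, i.e. $(a-P_{J}(a))\textbf{t}=\textbf{t}(a-P_{J}(a))=0$ and $\pi(\textbf{t})(a-P_{J}(a))=0$ for every $a\in\uu$. Hence $a\textbf{t}-\textbf{t}a=\sum_{i\in J}\tilde{\rho}_{i}(P_{i}(a)\textbf{t}_{i}-\textbf{t}_{i}P_{i}(a))$ holds exactly for all $a$ (no error term and no factor $\Vert\textbf{t}\Vert$ anywhere), and the truncation error enters only the second condition as $\Vert\pi(\textbf{t})a-a\Vert\leq\sum_{i\in J}\Vert\pi(\textbf{t}_{i})P_{i}(a)-P_{i}(a)\Vert+\Vert P_{J}(a)-a\Vert$, which is why the paper splits $\varepsilon$ as $\varepsilon/2+\varepsilon/2$ with $\Vert P_{J}(a)-a\Vert<\varepsilon/2$ chosen independently of the $\textbf{t}_{i}$. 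With that correction your argument coincides with the paper's, except that you verify symmetry of $\textbf{t}$ directly (via $\tilde{\rho}_{i}=\rho_{i}\otimes\rho_{i}$ preserving symmetric tensors), whereas the paper checks all four conditions of Proposition \ref{equivalent symmetric}; both routes are legitimate.
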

\begin{proof}
Let $ \uu= \bigoplus_{i\in I}^{p}\uu_{i} $. Given $\varepsilon>0 $ and a finite set $ F\subset\uu $, we can choose a finite set $ J_{F,\varepsilon}\subset I $ for which
 $ \Vert P_{J_{F,\varepsilon}}(a)-a\Vert<\dfrac{\varepsilon}{2}$ for each $a\in F $. Since each $ \uu_{i} $ is symmetrically pseudo-amenable, by Proposition \ref{equivalent symmetric}, for every $i\in J_{F,\varepsilon}$ there is $\textbf{t}_{i}\in\uu_{i}\widehat{\otimes}\uu_{i} $ such that
\[ \Vert P_{i}(a)\textbf{t}_{i}-\textbf{t}_{i}P_{i}(a)\Vert<\dfrac{\varepsilon}{\mid J_{F,\varepsilon}\mid}; \]
\[ \Vert\pi(\textbf{t}_{i})P_{i}(a)-P_{i}(a)\Vert<\dfrac{\varepsilon}{2\mid J_{F,\varepsilon}\mid};\]
 \[\Vert P_{i}(a)\circ \textbf{t}_{i}-\textbf{t}_{i}\circ P_{i}(a)\Vert<\dfrac{\varepsilon}{\mid J_{F,\varepsilon}\mid} \]
 and
 \[\Vert P_{i}(a)\pi^{\circ}(\textbf{t}_{i})-P_{i}(a)\Vert<\dfrac{\varepsilon}{2\mid J_{F,\varepsilon}\mid} \]
 for each $a\in F$, where $ \mid J_{F,\varepsilon}\mid=\textit{card}J_{F,\epsilon} $ and $ P_{i} $ denotes the projection $ P_{\{i\}} $. Now we consider the embedding $\tilde{\rho}_{i}:\uu_{i}\widehat{\otimes}\uu_{i}\longrightarrow \uu\widehat{\otimes}\uu$ and choose the element $ \textbf{t}_{F,\varepsilon} $ in $ \PT $ as follows
 \[\textbf{t}_{F,\varepsilon}=\sum_{i\in J_{F,\varepsilon}}\tilde{\rho}_{i}(\textbf{t}_{i}). \]
For every $ a\in\uu $ we have $ a=\sum_{i\in I}P_{i}(a) $, and hence, according to the definition of $ \textbf{t}_{F,\varepsilon} $ and $ \tilde{\rho}_{i} $ we have
 \[a\textbf{t}_{F,\varepsilon}=\sum_{i\in J_{F,\varepsilon}}P_{i}(a)\tilde{\rho}_{i}(\textbf{t}_{i})=\sum_{i\in J_{F,\varepsilon}}\tilde{\rho}_{i}(P_{i}(a)\textbf{t}_{i}).\]
In the same way
 \[\textbf{t}_{F,\varepsilon}a=\sum_{i\in J_{F,\varepsilon}}\tilde{\rho}_{i}(\textbf{t}_{i}P_{i}(a)); \]
 \[a\circ \textbf{t}_{F,\varepsilon}=\sum_{i\in J_{F,\epsilon}} \tilde{\rho}_{i}(P_{i}(a)\circ \textbf{t}_{i}) \]
and
\[\textbf{t}_{F,\varepsilon}\circ a=\sum_{i\in J_{F,\varepsilon}} \tilde{\rho}_{i}(\textbf{t}_{i}\circ P_{i}(a)). \]
Therefore, for each $ a\in F $ we have
\begin{equation*}
\begin{split}
\Vert a\textbf{t}_{F,\varepsilon}-\textbf{t}_{F,\varepsilon}a\Vert &=\Vert\sum_{i\in J_{F,\varepsilon}}\tilde{\rho}_{i}(P_{i}(a)\textbf{t}_{i}-\textbf{t}_{i}P_{i}(a))\Vert\\&
\leq \sum_{i\in J_{F,\varepsilon}}\Vert\tilde{\rho}_{i}(P_{i}(a)\textbf{t}_{i}-\textbf{t}_{i}P_{i}(a))\Vert \\&
\leq\sum_{i\in J_{F,\varepsilon}}\Vert(P_{i}(a)\textbf{t}_{i}-\textbf{t}_{i}P_{i}(a)\Vert<\varepsilon
\end{split}
\end{equation*} 
 and
 \begin{equation*}
\begin{split}
\Vert \pi(\textbf{t}_{F,\varepsilon})a-a\Vert &=\Vert\pi(\textbf{t}_{F,\varepsilon}a)-a\Vert \\&
=\Vert\sum_{i\in J_{F,\varepsilon}}\pi(\tilde{\rho}_{i}(\textbf{t}_{i}P_{i}(a))-a\Vert\\&
=\Vert\sum_{i\in J_{F,\varepsilon}}\pi(\textbf{t}_{i})P_{i}(a)-a\parallel\\&
\leq\Vert\sum_{i\in J_{F,\varepsilon}}\pi(\textbf{t}_{i})P_{i}(a)-\sum_{i\in J_{F,\varepsilon}}P_{i}(a)\Vert+\Vert\sum_{i\in J_{F,\varepsilon}}P_{i}(a)-a\Vert\\&
\leq\sum_{i\in J_{F,\varepsilon}}\Vert\pi(\textbf{t}_{i})P_{i}(a)-P_{i}(a)\Vert + \Vert P_{J_{F,\varepsilon}}(a)-a\Vert<\dfrac{\varepsilon}{2}+\dfrac{\varepsilon}{2}=\varepsilon .
\end{split}
\end{equation*}
 Similarly, we have
 \[\Vert a\circ \textbf{t}_{F,\varepsilon}-\textbf{t}_{F,\varepsilon}\circ a\Vert<\varepsilon \]
 and
 \[\Vert a\pi^{\circ}(\textbf{t}_{F,\varepsilon})-a\Vert<\varepsilon \]
 for each $ a\in F $. So from Proposition \ref{equivalent symmetric} it follows that $\uu$ is symmetrically pseudo-amenable.
\end{proof}
 Now we are in a position to present the desired example.
 \begin{exam}\label{not amenable}
 For each $n\geqslant 1  $, let $ M_{n}(\mathbb{C}) $ be the algebra of $ n\times n $ matrices over the complex field $ \mathbb{C} $. Every $ M_{n}(\mathbb{C}) $ is symmetrically amenable because \[\textbf{t}=\dfrac{1}{n}\sum_{i,j=1}^{n}E_{ij}\otimes E_{ji},\] 
 where the $E_{ij}$ are the usual matrix units, is a symmetric diagonal in $ M_{n}(\mathbb{C}) $ (see \cite[Proposition 1.9.20]{dal}). According to Proposition \ref{directsum}, $ \bigoplus_{n\in \mathbb{N}}^{0}M_{n}(\mathbb{C}) $ is a symmetrically pseudo-amenable $ C^{*} $-algebra that is not amenable.
 \end{exam}
 We know that every symmetrically amenable Banach algebra is amenable and symmetrically pseudo-amenable. According to this point and the above examples, the following question arises:
 \\ \\
 \textbf{Question.} What is the relationship between the class of symmetrically amenable Banach algebras and the class of amenable and symmetrically pseudo-amenable Banach algebras? Is the Banach algebra that is  amenable and symmetrically pseudo-amenable, is the symmetrically amenable Banach algebra?
 \par 
 We continue this section by studying some hereditary properties.
  \begin{prop}\label{epimorphism}
 Let $ \uu $ and $ \mathcal{B} $ be two Banach algebras. If $ \uu $ is symmetrically pseudo-amenable and there is a continuous epimorphism $ \theta $ from $ \uu $ onto $ \mathcal{B} $, then $ \mathcal{B} $ is symmetrically pseudo-amenable. In particular, the quotient algebra $\uu / I$ is symmetrically pseudo-amenable for any two-sided closed ideal $ I $ of $\uu  $.
 \end{prop}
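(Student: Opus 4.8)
The plan is to push an approximate diagonal of $\uu$ forward along $\theta$. Let $\theta\widehat{\otimes}\theta:\PT\to\mathcal{B}\widehat{\otimes}\mathcal{B}$ be the continuous linear map induced on completed projective tensor products by the universal property of $\widehat{\otimes}$, so that $(\theta\widehat{\otimes}\theta)(a\otimes b)=\theta(a)\otimes\theta(b)$ for $a,b\in\uu$ and $\Vert\theta\widehat{\otimes}\theta\Vert\le\Vert\theta\Vert^{2}$. Three elementary identities, each verified on elementary tensors and then extended by linearity and continuity, do all the work. First, $\theta\widehat{\otimes}\theta$ is a bimodule morphism for both pairs of actions, i.e. $(\theta\widehat{\otimes}\theta)(a\,\textbf{t})=\theta(a)\,(\theta\widehat{\otimes}\theta)(\textbf{t})$, $(\theta\widehat{\otimes}\theta)(\textbf{t}\,a)=(\theta\widehat{\otimes}\theta)(\textbf{t})\,\theta(a)$, and likewise with $\circ$ in place of juxtaposition. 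Second, $\pi_{\mathcal{B}}\circ(\theta\widehat{\otimes}\theta)=\theta\circ\pi$ and $\pi_{\mathcal{B}}^{\circ}\circ(\theta\widehat{\otimes}\theta)=\theta\circ\pi^{\circ}$, which hold because $\theta$ is a homomorphism. Third, the flip commutes with $\theta\widehat{\otimes}\theta$, that is $\bigl((\theta\widehat{\otimes}\theta)(\textbf{t})\bigr)^{\circ}=(\theta\widehat{\otimes}\theta)(\textbf{t}^{\circ})$; in particular $\theta\widehat{\otimes}\theta$ maps symmetric tensors to symmetric tensors.

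Now let $\{\T\}_{\lambda\in\Lambda}$ be an approximate diagonal for $\uu$ consisting of symmetric elements, which exists by hypothesis, and set $\textbf{s}_{\lambda}=(\theta\widehat{\otimes}\theta)(\T)$; by the third identity each $\textbf{s}_{\lambda}$ is symmetric. Fix $b\in\mathcal{B}$ and, using surjectivity of $\theta$, choose $a\in\uu$ with $\theta(a)=b$. Then
\[
b\,\textbf{s}_{\lambda}-\textbf{s}_{\lambda}\,b=(\theta\widehat{\otimes}\theta)(a\T-\T a)\longrightarrow 0
\]
by continuity of $\theta\widehat{\otimes}\theta$ together with condition $(1)$ for $\{\T\}$, and
\[
\pi_{\mathcal{B}}(\textbf{s}_{\lambda})\,b=\theta\bigl(\pi(\T)\,a\bigr)\longrightarrow\theta(a)=b
\]
by condition $(2)$ and continuity of $\theta$. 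Hence $\{\textbf{s}_{\lambda}\}_{\lambda\in\Lambda}$ is an approximate diagonal for $\mathcal{B}$ consisting of symmetric tensors, so $\mathcal{B}$ is symmetrically pseudo-amenable. (Alternatively, one may feed all four identities into Proposition \ref{equivalent symmetric} and check $(\mathrm{iii})$ and $(\mathrm{iv})$ the same way, but $(1)$--$(2)$ plus symmetry already suffices directly from the definition.) The final assertion is obtained by taking $\theta$ to be the canonical quotient map $\uu\to\uu/I$, which is a continuous (indeed contractive) epimorphism.

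There is essentially no hard step here. The only point needing a word of care is the construction of $\theta\widehat{\otimes}\theta$ on the completed projective tensor product and the passage of the above identities from elementary tensors to all of $\PT$, both handled by the universal property of $\widehat{\otimes}$ and a routine density/continuity argument. Surjectivity of $\theta$ is used exactly once, namely to produce a preimage $a$ of a given $b\in\mathcal{B}$, so it is genuinely needed rather than cosmetic.
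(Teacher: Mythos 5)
Your proposal is correct and follows essentially the same route as the paper: push the symmetric approximate diagonal forward by $\theta\widehat{\otimes}\theta$, using the module-morphism identities, the compatibility $\pi_{\mathcal{B}}\circ(\theta\widehat{\otimes}\theta)=\theta\circ\pi$, surjectivity to lift elements of $\mathcal{B}$, and the fact that the flip commutes with $\theta\widehat{\otimes}\theta$ so symmetric tensors go to symmetric tensors. The only cosmetic difference is that the paper also records the $\circ$-action and $\pi^{\circ}$ identities (in the spirit of Proposition \ref{equivalent symmetric}), whereas you correctly observe that conditions $(1)$--$(2)$ plus preserved symmetry already suffice by the definition.
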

 \begin{proof}
 Consider the continuous linear mapping $ \theta\otimes\theta:\PT\longrightarrow  \mathcal{B}\widehat{\otimes}\mathcal{B}$. For each $ \textbf{t}\in\PT $ and $ a\in\uu $ we have the followings
 \[\theta\otimes\theta(a\textbf{t})=\theta(a)\theta\otimes\theta(\textbf{t})\quad \text{and} \quad  \theta\otimes\theta(\textbf{t}a)=\theta\otimes\theta(\textbf{t})a; \]
 \[\theta\otimes\theta(a\circ \textbf{t})=\theta(a)\circ\theta\otimes\theta(\textbf{t}) \quad \text{and} \quad \theta\otimes\theta(\textbf{t}\circ a)=\theta\otimes\theta(\textbf{t})\circ \theta(a);\]
 \[\pi(\theta\otimes\theta(\textbf{t}))=\theta(\pi(\textbf{t}))\quad \text{and} \quad \pi^{\circ}(\theta\otimes\theta(\textbf{t}))=\theta(\pi^{\circ}(\textbf{t})) .\]
 Considering these relationships and that $ \theta $ is epimorphism, it follows that $ \theta\otimes\theta $ maps any symmetric approximate diagonal for $ \uu $ to a symmetric approximate diagonal for $\mathcal{B}$.
 \end{proof}
 \begin{prop}\label{symmetric sa ideal}
 Let $\uu$ be a symmetrically pseudo-amenable Banach algebra, and let $ J $ be a two-sided closed ideal of $ \uu $. If $ J $ has an approximate identity $ \{e_{i}\}_{i\in I} $ such that the associated left and right multiplication operators  $ L_{i}:a\rightarrow e_{i}a $ and $ R_{i}:a\rightarrow ae_{i} $ from $ \uu $ into $ J $ are uniformly bounded, then $ J $ is symmetrically pseudo-amenable.
 \end{prop}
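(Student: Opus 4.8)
The plan is to verify the finite‑subset criterion recorded just after Proposition~\ref{equivalent symmetric}: for every finite $F\subset J$ and every $\varepsilon>0$ I must produce a \emph{symmetric} $\mathbf{t}\in J\widehat{\otimes}J$ with $\|j\mathbf{t}-\mathbf{t}j\|$, $\|j\circ\mathbf{t}-\mathbf{t}\circ j\|$, $\|\pi(\mathbf{t})j-j\|$ and $\|j\pi^{\circ}(\mathbf{t})-j\|$ all $<\varepsilon$ for $j\in F$, where the norms are those of $J\widehat{\otimes}J$. Fix $M\geq1$ with $\|L_i\|\leq M$ and $\|R_i\|\leq M$ for all $i$, and fix a symmetric approximate diagonal $\{\T\}_{\lambda\in\Lambda}$ for $\uu$ as in Proposition~\ref{equivalent symmetric}; write $\T=\sum_{k}a_k\otimes b_k$.

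Given $F$ and $\varepsilon$ I would build $\mathbf{t}$ in two stages, the \emph{order} being the crucial point. First, for a tolerance $\eta>0$ to be fixed at the end, choose $\lambda$ with $\|a\T-\T a\|$, $\|a\circ\T-\T\circ a\|$ and $\|\pi(\T)a-a\|$ all $<\eta$ for $a\in F$; henceforth $\T$ is a \emph{fixed} element of $\PT$. Only then choose $\delta>0$ small relative to $\varepsilon/(M\|\T\|+1)$ and an index $i$ with $\|e_ij-j\|<\delta$, $\|je_i-j\|<\delta$ for all $j\in F$ (so also $\|[j,e_i]\|\leq2\delta$ and $\|e_i^{2}j-j\|\leq(M+1)\delta$). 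Put $\mathbf{s}:=e_i\T e_i=(L_i\otimes R_i)(\T)\in J\widehat{\otimes}J$ and take
\[
\mathbf{t}\ :=\ \tfrac12\bigl(\mathbf{s}+\mathbf{s}^{\circ}\bigr)\ =\ \tfrac12\bigl(e_i\T e_i+e_i\circ\T\circ e_i\bigr)\ \in\ J\widehat{\otimes}J,
\]
which is symmetric since $\T^{\circ}=\T$, and lies in $J\widehat{\otimes}J$ because $e_ia_k,\,b_ke_i,\,a_ke_i,\,e_ib_k\in J$.

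For the estimates I would use: (a) left, right and two‑sided multiplication by $e_i$ on $\PT$ are $L_i\otimes\mathrm{id}$, $\mathrm{id}\otimes R_i$, $L_i\otimes R_i$, valued in $J\widehat{\otimes}\uu$, $\uu\widehat{\otimes}J$, $J\widehat{\otimes}J$ and of norm $\leq M$ (resp.\ $\leq M^{2}$); (b) the associativity identities $j\circ(e_i\mathbf{x}e_i)=e_i(j\circ\mathbf{x})e_i$, $(e_i\mathbf{x}e_i)\circ j=e_i(\mathbf{x}\circ j)e_i$, and $\pi(e_i\mathbf{x}e_i)=e_i\pi(\mathbf{x})e_i$. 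From (b), $j\circ\mathbf{s}-\mathbf{s}\circ j=e_i(j\circ\T-\T\circ j)e_i$, whose $J\widehat{\otimes}J$‑norm is $\leq M^{2}\eta$; while $j\mathbf{s}-\mathbf{s}j$ becomes $e_i(j\T-\T j)e_i$ once $je_i$ is replaced by $e_ij$ on the left and $e_ij$ by $je_i$ on the right, at a cost bounded by a fixed multiple of $M\delta\|\T\|$, so $\|j\mathbf{s}-\mathbf{s}j\|<M^{2}\eta+cM\delta\|\T\|$. For the normalisation, $\pi(\mathbf{s})j=e_i\pi(\T)e_ij$, and inserting $\|\pi(\T)j-j\|<\eta$ (valid since $j\in F$) after absorbing the $e_i$ at cost $M\|\T\|\delta$, together with $\|e_i^{2}j-j\|\leq(M+1)\delta$, yields $\|\pi(\mathbf{s})j-j\|<M\eta+(M\|\T\|+M+1)\delta$. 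Passing from $\mathbf{s}$ to the symmetrised $\mathbf{t}$, and from $\pi$ to $\pi^{\circ}$, is then handled with the flip and the same bounds. Finally I would pick $\eta$ small compared with $\varepsilon/M^{2}$ and \emph{then} $\delta$ small compared with $\varepsilon/(M\|\T\|+1)$, so that all four quantities are $<\varepsilon$ on $F$; Proposition~\ref{equivalent symmetric} completes the argument, and taking the family over all $(F,\varepsilon)$ gives a symmetric approximate diagonal for $J$.

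The step I expect to be the genuine obstacle is the coordination forced by the \emph{unboundedness} of $\{\T\}$: in the amenable case one would simply take a weak‑$*$ cluster point of the bounded net $e_i\T_\lambda e_i$, but here no error of the form $\eta\|\T\|$ may appear, so every quantity multiplied by $\|\T\|$ must carry the factor $\delta$ that is chosen \emph{after} $\T$ is fixed. This is exactly why one must never commute $\T$ past an $e_i$‑dependent element (which would produce an uncontrollable $\eta\|\T\|$), but instead absorb all the $e_i$'s through the approximate‑identity estimates, and why every intermediate term has to stay inside $J\widehat{\otimes}J$ rather than merely in $\PT$ — the point at which the uniform bound $M$ on the $L_i$ and $R_i$ is indispensable. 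Within this, the most delicate bookkeeping is making the \emph{symmetrised} element $\mathbf{t}$, and not just $\mathbf{s}=e_i\T e_i$, satisfy the normalisation conditions (ii) and (iv): for $\mathbf{s}$ by itself $\pi^{\circ}(\mathbf{s})=\sum_k b_ke_i^{2}a_k$ is a trace‑type element of $\uu$, not an approximate identity for $J$, so condition (iv) need not survive the sandwiching on its own and really has to be recovered from the symmetrisation — getting this to work, using the two‑sidedness of the approximate identity of $J$ and the uniform multiplier bound, is the heart of the proof.
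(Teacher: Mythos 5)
Your construction ($\mathbf{s}=e_i\T e_i=\sum_k e_ia_k\otimes b_ke_i$, then symmetrise) is genuinely different from the paper's, which sandwiches both legs on the same side, taking $\mathbf{m}_{\lambda i}=(\T\circ e_i)e_i=\sum_k a_ke_i\otimes b_ke_i$; that element is automatically symmetric when $\T$ is, so no symmetrisation step is needed there. Your ordering of choices ($\lambda$ first, then $i$, so that every error carrying the factor $\Vert\T\Vert$ is paid with $\delta$) is exactly the right mechanism and matches the paper. But your outline has a genuine gap precisely at the point you flag at the end, and your proposed way out of it does not work. The normalisation condition for the symmetrised element is $\pi(\mathbf{t})j\approx j$ with $\pi(\mathbf{t})=\tfrac12\bigl(\pi(\mathbf{s})+\pi^{\circ}(\mathbf{s})\bigr)$, so symmetrisation does not ``recover'' the problematic term --- it forces you to control it: you must show $\pi^{\circ}(\mathbf{s})j=\sum_k b_ke_i^{2}a_kj\approx j$. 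Here the $e_i$'s sit between the two legs of $\T$ and have to be absorbed past the infinitely many $a_k$'s, not past the finitely many elements of $F$; the only requirements you impose on $i$ (namely $\Vert e_ij-j\Vert<\delta$ and $\Vert je_i-j\Vert<\delta$ for $j\in F$) give no control whatsoever over this quantity, and no estimate ``handled with the flip and the same bounds'' produces it.

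The missing ingredient is exactly the auxiliary Claim in the paper's proof: for a \emph{fixed} $\T\in\PT$ and $j\in J$, the approximate identity acts in norm on the leg of $\T j$ (resp.\ $\T\circ j$) that lies in $J$, e.g.\ $e_i\circ(\T j)\rightarrow\T j$, proved by truncating $\T$ to a finite sum and using the uniform bound $M$ on the tail. With such a lemma your argument can be repaired: writing $\pi^{\circ}(\mathbf{s})j=\pi^{\circ}\bigl(e_i^{2}(\T\circ j)\bigr)$, whose first legs $a_kj$ lie in $J$, an additional requirement on $i$ (chosen after $\lambda$, so dependence on $\T$ is harmless) of the form $\Vert e_i^{2}(\T\circ j)-\T\circ j\Vert$ small yields $\pi^{\circ}(\mathbf{s})j\approx\pi^{\circ}(\T)j\approx j$, where the last step needs $\Vert\pi^{\circ}(\T)j-j\Vert<\eta$ --- a condition you should also add when choosing $\lambda$ (it follows from (iii) and (iv) by the remark after Proposition \ref{equivalent symmetric}). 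The paper sidesteps the symmetrisation issue entirely by its choice of $\mathbf{m}_{\lambda i}$ and uses its Claim to verify $\pi(\mathbf{m}_{\lambda i})a\approx a$; your route is salvageable, but only after importing that Claim and the corresponding extra condition in the choice of $i$, neither of which appears in your outline.
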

 \begin{proof}
 Under the condition on $ e_{i} $, there is a constant $ M\geq 1 $ such that $ \Vert e_{i} a \Vert\leq M\Vert a \Vert $ and $ \Vert ae_{i}\Vert\leq M\Vert a\Vert$
 for all $ e_{i} $ and all $ a\in\uu $. So $\Vert e_{i}\textbf{t}\Vert\leq M \Vert \textbf{t} \Vert $, $\Vert \textbf{t}e_{i}\Vert\leq M \Vert \textbf{t} \Vert $, $\Vert e_{i}\circ \textbf{t}\Vert\leq M\Vert \textbf{t}\Vert $ and $ \Vert \textbf{t}\circ e_{i}\Vert\leq M\Vert \textbf{t}\Vert $ for all $ e_{i} $ and all $ \textbf{t}\in\PT $.
\par 
To prove the theorem, we first prove the following claim.
\\
\textbf{Claim.} For each $ a\in J $ and $ \textbf{t}\in\PT $ we have
 \[e_{i}\circ (\textbf{t}a)\longrightarrow \textbf{t}a.\]
\textit{Reason.}  Suppose $ \textbf{t}=\sum_{j=1}^{\infty}a_{j}\otimes b_{j}\in \PT $. For each $ k\in \mathbb{N} $, we put $ \textbf{t}_{k}=\sum_{j=1}^{k}a_{j}\otimes b_{j} $. Since $ \textbf{t}_{k}\longrightarrow \textbf{t} $, it follows that for $ a\in J $, $ \textbf{t}_{k}a\longrightarrow \textbf{t}a $.  So for $ \varepsilon\geq 0 $, there exists a $k_{0}$ such that
 \[\Vert \textbf{t}_{k_{0}}a-\textbf{t}a\Vert<\dfrac{\varepsilon}{M+2}.\]
On the other hand, since $\{ e_{i}\}_{i\in I} $ is an approximate identity for $ J $ and $ \textbf{t}_{k_{0}}=\sum_{j=1}^{k_0}a_{j}\otimes b_{j} $, we have $ e_{i}\circ (\textbf{t}_{k_{0}}a)=\sum_{j=1}^{k_{0}}a_{j}\otimes e_{i}b_{j}a $ and therefore $ e_{i}\circ(\textbf{t}_{k_{0}}a)\rightarrow \textbf{t}_{k_{0}}a $. So there is an $ i_{0}\in I $ such that for every $ i\geqslant i_{0} $ we have
 \[\Vert e_{i}\circ(\textbf{t}_{k_{0}}a)-\textbf{t}_{k_{0}}a\Vert <\dfrac{\varepsilon}{M+2}.\]
Thus if $ i\geq i_{0} $, then 
\begin{equation*}
\begin{split}
\Vert e_{i}\circ(\textbf{t}a)-\textbf{t}a\Vert &\leq \parallel e_{i}\circ (\textbf{t}a)-e_{i}\circ(\textbf{t}_{k_{0}}a)\Vert +\Vert e_{i}\circ(\textbf{t}_{k_{0}}a)-\textbf{t}_{k_{0}}a\Vert +\Vert\textbf{t}_{k_{0}}a-\textbf{t}a\Vert\\&
\leq M\Vert\textbf{t}a-\textbf{t}_{k_{0}}a\Vert +\Vert e_{i}\circ(\textbf{t}_{k_{0}}a)-\textbf{t}_{k_{0}}a\Vert +\Vert \textbf{t}_{k_{0}}a-\textbf{t}a\Vert\\&
< M\dfrac{\varepsilon}{M+2}+\dfrac{\varepsilon}{M+2}+\dfrac{\varepsilon}{M+2}=\varepsilon .
\end{split}
\end{equation*}
Therefore, the claim is true.
\par 
 Now we prove the theorem.
\par 
Let $ \{\T\}_{\lambda\in \Lambda}\subseteq\PT $ be a symmetric approximate diagonal for $ \uu $. For an arbitrary $ \varepsilon> 0 $ and finite set $ F\subseteq J $, we choose $ \lambda\in\Lambda $ such that
 \[\Vert a\T-\T a\Vert M^{2}<\dfrac{\varepsilon}{2}\]
 and
 \[\Vert\pi(\T)a-a\Vert M<\frac{\varepsilon}{3}\] 
 for each $ a\in F $. Then, according to the proven claim and that $ \{e_{i}\}_{i\in I} $ is an approximate identity for $J$, we choose $i\in I$ so that
 \[\Vert ae_{i}-e_{i}a\Vert M\Vert\T\Vert<\dfrac{\varepsilon}{2};\]
 \[\Vert e_{i}a-a\Vert M\Vert\T\Vert<\dfrac{\varepsilon}{3}\]
 and
 \[\Vert e_{i}\circ(\T a)-\T a\Vert<\dfrac{\varepsilon}{3}\]
for $a\in F$.  We put
 \[\textbf{m}_{\lambda i}=(\T \circ e_{i})e_{i},\]
where $ \lambda\in\Lambda , i\in I $. We have the following relations
 \[a\textbf{m}_{\lambda i}-\textbf{m}_{\lambda i}a=[(a\T-\T a)\circ e_{i}]e_{i}+(\T\circ e_{i})(ae_{i}-e_{i}a) \]
 and
 \[ \pi(\textbf{m}_{\lambda i})a=\pi(e_{i}\circ\T)(e_{i}a-a)+\pi(e_{i}\circ(\T a)) \]
 for $ a\in J $. So, for $\lambda\in \Lambda$ and $i\in I$ chosen before, we have
\begin{equation*}
\begin{split}
\Vert a\textbf{m}_{\lambda i}-\textbf{m}_{\lambda i}a \Vert & \leq \Vert [(a\T-\T a)\circ e_{i}]e_{i}\Vert +\Vert (\T\circ e_{i})(ae_{i}-e_{i}a)\Vert\\&
\leq M^{2} \Vert a\T-\T a \Vert+M \Vert \T \Vert\Vert ae_{i}-e_{i}a \Vert\\&
< \dfrac{\varepsilon}{2}+\dfrac{\varepsilon}{2}=\varepsilon
\end{split}
\end{equation*}
and
\begin{equation*}
\begin{split}
\Vert\pi(\textbf{m}_{\lambda i})a-a \Vert &=\Vert \pi(e_{i}\circ\T)(e_{i}a-a) +\pi(e_{i}\circ(\T a))-a \Vert\\&
\leq \Vert\pi (e_{i}\circ\T)(e_{i}a-a) \Vert +\Vert\pi(e_{i}\circ(\T a))-\pi(\T a)\Vert +\Vert\pi(\T a)-a \Vert\\&
\leq \Vert e_{i}\circ\T \Vert \Vert e_{i}a-a \Vert +\Vert e_{i}\circ(\T a)-\T a \Vert +\Vert \pi(\T)a-a \Vert\\&
\leq M\Vert\T \Vert \Vert e_{i}a-a \Vert +\Vert e_{i}\circ(\T a)-\T a \Vert +\Vert \pi(\T)a-a \Vert\\&
<\dfrac{\varepsilon}{3}+\dfrac{\varepsilon}{3}+\dfrac{\varepsilon}{3}=\varepsilon
\end{split}
\end{equation*}
for each $ a\in F $. It is also checked routinely for each $(\lambda,i)\in\Lambda\times I $ that $ \textbf{m}^{\circ}_{\lambda i}=\textbf{m}_{\lambda i} $. Thus choosing an appropriate subnet of $ \{\textbf{m}_{\lambda i}\}_{\Lambda\times I} \subset J\widehat{\otimes}J $, we get a symmetric approximate diagonal for $J$. So $ J $ is symmetrically pseudo-amenable.
 \end{proof}
 We have the following result from the above theorem.
\begin{cor}\label{cor sa ideal}
Let $ \uu $ be a symmetrically pseudo-amenable Banach algebra and let $ J $ be a two-sided closed ideal of $ \uu $. If $ J $ has a bounded approximate identity, then $ J $ is symmetrically pseudo-amenable.
\end{cor}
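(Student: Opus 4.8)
The plan is to obtain this as an immediate special case of Proposition \ref{symmetric sa ideal}, the only point being to verify that a bounded approximate identity automatically forces the relevant multiplication operators to be uniformly bounded. So suppose $\{e_i\}_{i\in I}$ is a bounded approximate identity for $J$, say with $\sup_{i\in I}\Vert e_i\Vert = C < \infty$. For every $a\in\uu$ and every $i\in I$, submultiplicativity of the norm on $\uu$ gives $\Vert e_i a\Vert \leq \Vert e_i\Vert\,\Vert a\Vert \leq C\Vert a\Vert$ and likewise $\Vert a e_i\Vert \leq C\Vert a\Vert$. Hence the left and right multiplication operators $L_i\colon a\mapsto e_i a$ and $R_i\colon a\mapsto a e_i$ from $\uu$ into $J$ all have operator norm at most $C$, so they are uniformly bounded.

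Once this elementary estimate is recorded, the hypotheses of Proposition \ref{symmetric sa ideal} are met, and that proposition directly yields that $J$ is symmetrically pseudo-amenable, completing the proof. There is no genuine obstacle at this stage: all of the work — constructing the symmetric approximate diagonal $\textbf{m}_{\lambda i} = (\textbf{t}_\lambda\circ e_i)e_i$ for $J$ out of one for $\uu$, together with the auxiliary claim that $e_i\circ(\textbf{t}a)\to \textbf{t}a$ — was already carried out in Proposition \ref{symmetric sa ideal}, and the corollary merely observes that boundedness of the approximate identity is the natural sufficient condition guaranteeing the uniform boundedness hypothesis used there.
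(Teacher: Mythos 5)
Your argument is correct and is exactly how the paper derives the corollary: it is stated as an immediate consequence of Proposition \ref{symmetric sa ideal}, with the only point being that $\sup_i\Vert e_i\Vert=C<\infty$ together with submultiplicativity gives $\Vert e_i a\Vert\leq C\Vert a\Vert$ and $\Vert a e_i\Vert\leq C\Vert a\Vert$, so the operators $L_i$ and $R_i$ are uniformly bounded. Nothing further is needed.
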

\begin{prop}\label{union}
 Let $\uu$ be a Banach algebra with a system of closed subalgebras $\lbrace\uu_{\gamma} \, : \, \gamma\in\Gamma \rbrace$ such that 
 \begin{itemize}
 \item[(i)] $\overline{(\bigcup_{\gamma\in\Gamma} \uu_{\gamma})}=\uu$;
 \item[(ii)] if $\gamma_1 , \gamma_2 \in \Gamma$ then there is $\gamma\in\Gamma$ with $\uu_{\gamma_1}\cup\uu_{\gamma_2}\subseteq\uu_{\gamma}$;
 \item[(iii)] for each $\gamma\in \Gamma$, $\uu_{\gamma}$ is a symmetrically pseudo-amenable Banach algebra.
 \end{itemize}
 Then $\uu$ is symmetrically pseudo-amenable.
 \end{prop}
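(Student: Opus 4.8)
The plan is to verify the finite-set criterion for symmetric pseudo-amenability recorded in the remark after Proposition~\ref{equivalent symmetric}: it suffices to show that for every finite $F\subseteq\uu$ and every $\varepsilon>0$ there is a symmetric $\textbf{t}\in\PT$ with $\Vert a\textbf{t}-\textbf{t}a\Vert<\varepsilon$ and $\Vert\pi(\textbf{t})a-a\Vert<\varepsilon$ for all $a\in F$ (once $\textbf{t}$ is symmetric, the two $\circ$-estimates follow automatically from the identities $a\circ\textbf{t}^{\circ}=(a\textbf{t})^{\circ}$, $\pi^{\circ}(\textbf{t})=\pi(\textbf{t}^{\circ})$ and the fact that the flip is an isometry). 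The underlying idea is to approximate all of $F$ simultaneously inside a single subalgebra $\uu_{\gamma_{0}}$, invoke the symmetric pseudo-amenability of $\uu_{\gamma_{0}}$ there, and push the resulting symmetric element forward to $\PT$ along the contractive map $\iota_{\gamma_{0}}\otimes\iota_{\gamma_{0}}\colon\uu_{\gamma_{0}}\widehat{\otimes}\uu_{\gamma_{0}}\to\PT$ induced by the inclusion $\iota_{\gamma_{0}}\colon\uu_{\gamma_{0}}\hookrightarrow\uu$; this map commutes with the flip and intertwines the $\uu_{\gamma_{0}}$-module actions and the product maps with those on $\uu$, exactly as in the proofs of Propositions~\ref{epimorphism} and~\ref{directsum}.

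So fix $F$ and $\varepsilon>0$. First I would use the density condition (i) together with the directedness in (ii) to pick a single $\gamma_{0}\in\Gamma$ and, for each $a\in F$, an element $a'\in\uu_{\gamma_{0}}$ with $\Vert a-a'\Vert$ small (choose $\gamma_{a}$ and $a'\in\uu_{\gamma_{a}}$ near $a$ by density, then absorb the finitely many $\gamma_{a}$ into one $\gamma_{0}$ by directedness); write $F'=\{a':a\in F\}\subseteq\uu_{\gamma_{0}}$. By (iii), $\uu_{\gamma_{0}}$ is symmetrically pseudo-amenable, so applying the finite-set criterion inside $\uu_{\gamma_{0}}$ there is a symmetric $\textbf{t}_{0}\in\uu_{\gamma_{0}}\widehat{\otimes}\uu_{\gamma_{0}}$ with $\Vert a'\textbf{t}_{0}-\textbf{t}_{0}a'\Vert<\varepsilon'$ and $\Vert\pi(\textbf{t}_{0})a'-a'\Vert<\varepsilon'$ for all $a'\in F'$, where $\varepsilon'>0$ is a tolerance to be fixed. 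Put $\textbf{t}=(\iota_{\gamma_{0}}\otimes\iota_{\gamma_{0}})(\textbf{t}_{0})\in\PT$; then $\textbf{t}$ is symmetric and, by the intertwining properties, $\Vert a'\textbf{t}-\textbf{t}a'\Vert<\varepsilon'$, $\Vert\pi(\textbf{t})a'-a'\Vert<\varepsilon'$ for $a'\in F'$. Passing back from $F'$ to $F$ by the triangle inequality gives, for $a\in F$,
\begin{equation*}
\Vert a\textbf{t}-\textbf{t}a\Vert\leq\varepsilon'+2\Vert a-a'\Vert\,\Vert\textbf{t}_{0}\Vert,\qquad \Vert\pi(\textbf{t})a-a\Vert\leq\varepsilon'+(\Vert\textbf{t}_{0}\Vert+1)\,\Vert a-a'\Vert,
\end{equation*}
so if $\varepsilon'<\varepsilon/2$ and $\Vert a-a'\Vert$ is small enough relative to $\Vert\textbf{t}_{0}\Vert$, both are $<\varepsilon$ and the criterion is met.

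The hard part --- and the step I expect to be the genuine obstacle --- is exactly the clause ``$\Vert a-a'\Vert$ small enough relative to $\Vert\textbf{t}_{0}\Vert$''. A symmetric approximate diagonal of $\uu_{\gamma_{0}}$ need not be bounded, so $\Vert\textbf{t}_{0}\Vert$ is not under our control, yet $\textbf{t}_{0}$ is produced only \emph{after} $\gamma_{0}$ (hence after the approximants $a'$) has been chosen; a careless ordering of the choices yields only a useless estimate. This is the same phenomenon that forced the direct-sum case, Proposition~\ref{directsum}, to be handled by a hands-on argument in which the coordinatewise action makes $a\,\tilde{\rho}_{i}(\textbf{t}_{i})=\tilde{\rho}_{i}(P_{i}(a)\textbf{t}_{i})$ an \emph{exact} identity, so that the approximation error enters only additively rather than multiplied by a diagonal norm. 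For the general directed union I would therefore aim to recover an analogous exactness: after recording $N=\Vert\textbf{t}_{0}\Vert$, re-enter the directed system via (i) and (ii), pass to a larger member $\uu_{\gamma_{1}}\supseteq\uu_{\gamma_{0}}$ in which $F$ is approximated to within a tolerance governed by $N$, and rebuild the diagonal element there, arguing that the process can be coordinated consistently; alternatively, if each $\uu_{\gamma}$ carries a bounded approximate identity the difficulty can be bypassed in the spirit of Corollary~\ref{cor sa ideal}. Making this coordination rigorous (or isolating the extra structure it really requires) is where the substance of the proof lies.
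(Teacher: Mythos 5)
Your proposal stalls exactly where you say it does, so as written it is not a proof: the bound $\Vert a\textbf{t}-\textbf{t}a\Vert\leq\varepsilon'+2\Vert a-a'\Vert\,\Vert\textbf{t}_{0}\Vert$ cannot be closed because $\textbf{t}_{0}$ (hence $\Vert\textbf{t}_{0}\Vert$) is produced only after $a'$ has been fixed and symmetric approximate diagonals of $\uu_{\gamma_{0}}$ need not be bounded; the suggested repair of ``re-entering the directed system with a tolerance governed by $N=\Vert\textbf{t}_{0}\Vert$'' is circular for the same reason, since the diagonal element rebuilt in the larger $\uu_{\gamma_{1}}$ arrives with a new, uncontrolled norm. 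So there is a genuine gap at the step you yourself flag as the substance of the argument.

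The paper's proof never runs into this, because it never approximates the members of the finite set. It takes $F$ to be a finite subset of $\bigcup_{\gamma\in\Gamma}\uu_{\gamma}$ rather than of $\uu$; by the directedness hypothesis (ii), $F$ lies \emph{exactly} inside a single $\uu_{\gamma}$, and symmetric pseudo-amenability of that $\uu_{\gamma}$ yields a symmetric $\textbf{t}_{F,\varepsilon}\in\uu_{\gamma}\widehat{\otimes}\uu_{\gamma}$, viewed in $\PT$ via the contractive inclusion, with $\Vert a\textbf{t}_{F,\varepsilon}-\textbf{t}_{F,\varepsilon}a\Vert<\varepsilon$ and $\Vert\pi(\textbf{t}_{F,\varepsilon})a-a\Vert<\varepsilon$ for all $a\in F$; no term of the form $\Vert a-a'\Vert\,\Vert\textbf{t}_{0}\Vert$ ever appears, which is precisely the kind of exactness you correctly noted in Proposition \ref{directsum}. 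The price is that the net $\{\textbf{t}_{F,\varepsilon}\}$ so constructed verifies the approximate-diagonal conditions, on its face, only for $a$ in the dense subalgebra $\bigcup_{\gamma\in\Gamma}\uu_{\gamma}$, and the paper passes to general $a\in\uu$ with the single phrase ``the result follows from (i)''. Your unboundedness concern is therefore not misplaced --- it is exactly what makes that final density step the delicate point when the diagonals are unbounded --- but your write-up locates the difficulty one stage too early and resolves it in neither place. To match the paper, first restrict to finite subsets of the union (this eliminates the circular dependence between $a'$ and $\textbf{t}_{0}$ entirely), and then address, or at least isolate as the one remaining issue, the passage from the dense union to all of $\uu$.
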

 \begin{proof}
 For an arbitrary $ \varepsilon> 0 $ and finite set $ F\subseteq  \bigcup_{\gamma\in\Gamma} \uu_{\gamma}$, by (ii) we choose $ \gamma\in\Gamma $ with $F\subseteq \uu_{\gamma}$. Since $\uu_{\gamma}$ is symmetrically pseudo-amenable, there is a symmetric element $\textbf{t}_{F,\varepsilon}\in \uu_{\gamma}\otimes\uu_{\gamma}\subseteq \uu \otimes\uu$ such that 
  \[\Vert a \textbf{t}_{F,\varepsilon}-\textbf{t}_{F,\varepsilon} a\Vert<\varepsilon \]
 and
 \[\Vert \pi(\textbf{t}_{F,\varepsilon})a-a\Vert<\varepsilon \]
 for each $ a\in F $. So the result follows from (i). 
 \end{proof}
\section{Some classes of symmetrically pseudo-amenable Banach algebras}
In the previous section we saw some classes of symmetrically pseudo-amenable Banach algebras, especially that every commutative pseudo-amenable Banach algebra is symmetrically pseudo-amenable. In this section, we introduce some other classes of symmetrically pseudo-amenable Banach algebras.  
\par 
First, we study a class of Banach algebras that belongs to the class of $\ell^{1}$-Munn algebras. Let $\mathbb{N}$ be the set of natural numbers. We denote by $M_{\mathbb{N}}(\mathbb{C})$, the set of $\mathbb{N}\times \mathbb{N}$ matrices $(a_{ij})$ with entries in $\mathbb{C}$ such that 
\[\Vert (a_{ij}) \Vert =\sum_{i,j\in \mathbb{N}}\vert a_{ij}\vert <\infty. \]
Then $M_{\mathbb{N}}(\mathbb{C})$ with the usual matrix multiplication is a Banach algebra that belongs to the class of $\ell^{1}$-Munn algebras. We write $E_{ij}$ for the matrix units and $aE_{ij}$ for the matrix with the $a$ at the $(i, j)$-entry and $0$ in all other entries. 
\begin{thm}\label{Mn}
The Banach algebra $M_{\mathbb{N}}(\mathbb{C})$ is symmetrically pseudo-amenable.
\end{thm}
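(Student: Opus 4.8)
The plan is to exhibit an explicit (unbounded) approximate diagonal for $M_{\mathbb{N}}(\mathbb{C})$ consisting of symmetric tensors, and then invoke the remark after Definition 3.1 (or Proposition \ref{equivalent symmetric}) that it suffices to work with finite subsets and $\varepsilon>0$. The natural candidate, mimicking the symmetric diagonal $\frac1n\sum_{i,j=1}^n E_{ij}\otimes E_{ji}$ for $M_n(\mathbb{C})$ from Example \ref{not amenable}, is
\[
\textbf{t}_n=\frac{1}{n}\sum_{i,j=1}^n E_{ij}\otimes E_{ji}\in M_{\mathbb{N}}(\mathbb{C})\widehat{\otimes}M_{\mathbb{N}}(\mathbb{C}),
\]
indexed by $n\in\mathbb{N}$. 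Each $\textbf{t}_n$ is manifestly symmetric under the flip, since the double sum is invariant under interchanging the two tensor legs together with swapping the summation indices $i\leftrightarrow j$. Also $\pi(\textbf{t}_n)=\frac1n\sum_{i,j=1}^n E_{ij}E_{ji}=\frac1n\sum_{i,j=1}^n E_{ii}=\sum_{i=1}^n E_{ii}$, which is the $n\times n$ identity block; this acts as a left (and right) identity on any matrix supported in the first $n$ rows and columns.

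First I would reduce to finitely supported matrices: the linear span of the $E_{kl}$ is dense in $M_{\mathbb{N}}(\mathbb{C})$ (by the $\ell^1$-norm), so given a finite $F\subset M_{\mathbb{N}}(\mathbb{C})$ and $\varepsilon>0$ it is enough, after approximating each $a\in F$ within $\varepsilon/3$ by a finitely supported matrix, to verify the approximate-diagonal inequalities for matrix units $a=E_{kl}$. Then I would compute directly, for $n\geq\max(k,l)$,
\[
E_{kl}\,\textbf{t}_n=\frac1n\sum_{i,j=1}^n E_{kl}E_{ij}\otimes E_{ji}=\frac1n\sum_{j=1}^n E_{kj}\otimes E_{jl},
\qquad
\textbf{t}_n\,E_{kl}=\frac1n\sum_{i=1}^n E_{ik}\otimes E_{li},
\]
so $\|E_{kl}\textbf{t}_n-\textbf{t}_n E_{kl}\|\leq \frac1n(n+n)=\frac2n\to 0$ in the projective norm (each summand has norm $1$). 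Likewise $\pi(\textbf{t}_n)E_{kl}=\big(\sum_{i=1}^nE_{ii}\big)E_{kl}=E_{kl}$ for $n\geq k$, so condition (2) holds exactly once $n$ is large enough. This shows $\{\textbf{t}_n\}$ is a symmetric approximate diagonal and hence $M_{\mathbb{N}}(\mathbb{C})$ is symmetrically pseudo-amenable.

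The one point requiring care — the main (minor) obstacle — is keeping track of the projective tensor norm: $\|\textbf{t}_n\|\leq \frac1n\cdot n^2=n$, so the net is genuinely unbounded (consistent with $M_{\mathbb{N}}(\mathbb{C})$ not being amenable), and one must be sure that the cancellations in $E_{kl}\textbf{t}_n-\textbf{t}_n E_{kl}$ really leave only $O(n)$ surviving elementary tensors of norm $1$ rather than estimating crudely by $\|E_{kl}\|\|\textbf{t}_n\|$. Bounding term-by-term after the explicit computation handles this. Alternatively, and perhaps more cleanly, one can observe that $M_{\mathbb{N}}(\mathbb{C})=\overline{\bigcup_n P_nM_{\mathbb{N}}(\mathbb{C})P_n}$ where $P_n=\sum_{i=1}^nE_{ii}$, that each corner $P_nM_{\mathbb{N}}(\mathbb{C})P_n\cong M_n(\mathbb{C})$ is symmetrically amenable (hence symmetrically pseudo-amenable) with the symmetric diagonal above, and that these corners form a directed system of closed subalgebras; then Proposition \ref{union} applies directly. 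I would present the direct computation as the main argument and remark on the Proposition \ref{union} route as an alternative.
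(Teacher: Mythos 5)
Your candidate net $\textbf{t}_n=\frac1n\sum_{i,j\le n}E_{ij}\otimes E_{ji}$ is exactly the one the paper uses, and the symmetry of $\textbf{t}_n$ and the identity $\pi(\textbf{t}_n)=I_n$ are fine. The gap is the reduction step: ``after approximating each $a\in F$ within $\varepsilon/3$ by a finitely supported matrix, it is enough to verify the inequalities for matrix units.'' To pass from the finitely supported approximant $a'$ back to $a$ you need $\Vert (a-a')\textbf{t}_n\Vert+\Vert \textbf{t}_n(a-a')\Vert\le C\Vert a-a'\Vert$ with $C$ independent of $n$; the only estimate available without further work is $\Vert a-a'\Vert\,\Vert\textbf{t}_n\Vert$, and $\Vert\textbf{t}_n\Vert=n$ (indeed $\Vert\textbf{t}_n\Vert\ge\Vert\pi(\textbf{t}_n)\Vert=\Vert I_n\Vert=n$ in the $\ell^1$-norm), while $n$ must be taken at least as large as the support of $a'$, which grows as the approximation improves. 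So the quantifiers do not close: this is precisely the ``crude estimate'' you warn against at the matrix-unit level, reappearing unaddressed at the density-reduction level, and it is exactly where unboundedness of the approximate diagonal bites. (A minor slip besides: with the module action $(b\otimes c)a=b\otimes ca$ one gets $\textbf{t}_nE_{kl}=\frac1n\sum_{j\le n}E_{kj}\otimes E_{jl}=E_{kl}\textbf{t}_n$ for $k,l\le n$, so the commutator with corner matrix units is exactly $0$, not merely $O(1/n)$; your displayed formula for $\textbf{t}_nE_{kl}$ is incorrect.)

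The paper avoids any reduction: for an arbitrary $A=(a_{ij})$ it splits $A=A_1+A_2+A_3+A_4$ relative to the corner $\mathbb{N}_n$, checks $A_1\textbf{t}_n=\textbf{t}_nA_1$ and $A_4\textbf{t}_n=\textbf{t}_nA_4=0$ exactly, and bounds the surviving terms $A_3\textbf{t}_n$ and $\textbf{t}_nA_2$ by the $\ell^1$-mass of the entries of $A$ outside the corner (the factor $\frac1n$ cancels against the $n$ choices of the middle index), so $\Vert A\textbf{t}_n-\textbf{t}_nA\Vert\to0$ with constants independent of $n$. Your argument can be repaired in the same spirit: either carry out that estimate for general $A$, or first prove the uniform bounds $\Vert B\textbf{t}_n\Vert\le\Vert B\Vert$, $\Vert\textbf{t}_nB\Vert\le\Vert B\Vert$ and $\Vert I_nB\Vert\le\Vert B\Vert$ for all $B$ and $n$ (the same cancellation), after which your density reduction becomes legitimate. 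Your fallback via Proposition \ref{union}, applied to the directed family of corners $P_nM_{\mathbb{N}}(\mathbb{C})P_n\cong M_n(\mathbb{C})$, is a genuinely different and acceptable route within the paper's framework, but as written it is only a remark, and the main argument needs the fix above.
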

\begin{proof}
For $n\in \mathbb{N}$ denote the finite set $\lbrace 1,2, \ldots n \rbrace$ by $\mathbb{N}_n$, and define 
\[ \textbf{t}_n=\dfrac{1}{n}\sum_{i,j\in \mathbb{N}_n} E_{ij}\otimes E_{ji} \in M_{\mathbb{N}}(\mathbb{C})\widehat{\otimes}M_{\mathbb{N}}(\mathbb{C}).\]
We will show that $\lbrace \textbf{t}_n\rbrace_{n\in \mathbb{N}}$ is a symmetric approximate diagonal for $M_{\mathbb{N}}(\mathbb{C})$. Let $A=(a_{ij})\in M_{\mathbb{N}}(\mathbb{C})$. According to the definition of $M_{\mathbb{N}}(\mathbb{C})$, for each $\varepsilon> 0$, there is an $n_0 \in \mathbb{N}$ such that for all $n\geq n_0$ we have $\sum_{i,j\notin \mathbb{N}_n}\vert a_{ij} \vert <\varepsilon$. For $n\in \mathbb{N}$ we have 
\[\pi(\textbf{t}_n)=\dfrac{1}{n}\sum_{i\in \mathbb{N}_n} nE_{ii}=I_n, \]
where $I_n:=\sum_{i=1}^{n} E_{ii}$.  So for $n\geq n_0$
\[\Vert \pi(\textbf{t}_n)A-A \Vert =\Vert I_n A-A \Vert\leq \sum_{i,j\notin \mathbb{N}_n}\vert a_{ij} \vert <\varepsilon . \]
Consequentially, $\pi(\textbf{t}_n)A\longrightarrow A$ for each $A\in M_{\mathbb{N}}(\mathbb{C})$. For $n\in \mathbb{N}$ define the matrices $A_r$, $1 \leq r\leq 4$ as follows
\[ A_1=(x_{ij}),~~ \text{where}~~ x_{ij}=a_{ij}~~ \text{for}~~ 1\leq i,j \leq n; ~x_{ij}=0~~ \text{for}~~ i> n~~ \text{or}~~ j> n ; \]
\[ A_2=(x_{ij}),~~ \text{where}~~ x_{ij}=a_{ij}~~ \text{for}~~ 1\leq i \leq n~~ \text{and}~~ j>n ;~ x_{ij}=0~~ \text{for}~~ i> n~~ \text{or}~~ 1\leq j \leq n; \]
\[ A_3=(x_{ij}),~~ \text{where}~~ x_{ij}=a_{ij}~~ \text{for}~~ i> n~~ \text{and}~~ 1\leq j \leq n ;~ x_{ij}=0~~ \text{for}~~ 1\leq i \leq n~~ \text{or}~~ j> n ;\]
and 
\[ A_4=(x_{ij}),~~ \text{where}~~ x_{ij}=a_{ij}~~ \text{for}~~ i> n~~ \text{and}~~ j> n ;~ x_{ij}=0~~ \text{for}~~ 1\leq i \leq n~~ \text{or}~~ 1\leq j\leq n .\]
It can be seen by routine calculations that
\[A_1 \textbf{t}_n=\textbf{t}_n A_1=\dfrac{1}{n}\sum_{k,l,j\in \mathbb{N}_n} a_{kl}E_{kj}\otimes E_{jl}; \]
\[ A_2 \textbf{t}_n=0 \quad \text{and} \quad \textbf{t}_n A_2=\dfrac{1}{n}\sum_{1\leq j , k \leq n< l} a_{kl}E_{kj}\otimes E_{jl};\]
\[  \textbf{t}_n A_3=0 \quad \text{and} \quad A_3\textbf{t}_n =\dfrac{1}{n}\sum_{1\leq j , l \leq n< k} a_{kl}E_{kj}\otimes E_{jl};\]
\[A_4 \textbf{t}_n=\textbf{t}_n A_4=0. \]
So for $n\geq n_0$
\[\Vert A\textbf{t}_n -\textbf{t}_n A \Vert = \Vert A_3\textbf{t}_n - \textbf{t}_n A_2\Vert \leq \sum_{1\leq l \leq n < k}\vert a_{ij}\vert +\sum_{1\leq k \leq n < l}\vert a_{ij}\vert \leq  \sum_{i,j\notin \mathbb{N}_n}\vert a_{ij} \vert <\varepsilon . \]
Hence $A\textbf{t}_n -\textbf{t}_n A \longrightarrow 0$. Also, for each $n\in \mathbb{N}$, it is clear that $\textbf{t}_n^{\circ}=\textbf{t}_n$. Therefore, $M_{\mathbb{N}}(\mathbb{C})$ is symmetrically pseudo-amenable.
\end{proof}
In the following, we will discuss some algebras over locally compact groups.
\begin{thm}\label{L1}
Let $G$ be a locally compact group. The group algebra $L^{1}(G)$ is symmetrically pseudo-amenable if and only if $G$ is an amenable group. 
\end{thm}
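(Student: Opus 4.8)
The plan is to derive each direction from an already-available characterisation, so that the only genuinely new point is reconciling the symmetric requirement with the non-symmetric one. For the \emph{necessity} direction I would argue as follows. If $L^{1}(G)$ is symmetrically pseudo-amenable then, as observed immediately after the definition of symmetric pseudo-amenability, it is pseudo-amenable; and it is known that a group algebra $L^{1}(G)$ is pseudo-amenable if and only if $G$ is amenable (see \cite{ghahramani 2007}), so $G$ is amenable. If a self-contained argument is wanted here, one takes a possibly unbounded approximate diagonal $\{\T\}$ in $L^{1}(G)\widehat{\otimes}L^{1}(G)\cong L^{1}(G\times G)$ and pushes it forward along $G\times G\to G$, $(s,t)\mapsto st^{-1}$; the conditions $f\T-\T f\to 0$ and $\pi(\T)f\to f$ then pass to a net in $L^{1}(G)$ which, after normalisation, is approximately left-translation invariant with total mass tending to $1$, and from this one extracts a left-invariant mean on $L^{\infty}(G)$.

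For the \emph{sufficiency} direction I would invoke Johnson's theorem that $L^{1}(G)$ is symmetrically amenable whenever $G$ is amenable \cite{johnson 1996}; since a symmetrically amenable Banach algebra is a fortiori symmetrically pseudo-amenable, this finishes the proof. Should a direct construction be preferred, I would use the finite-subset reformulation following Proposition~\ref{equivalent symmetric}: for each finite $F\subset L^{1}(G)$ and $\varepsilon>0$ it suffices to exhibit a symmetric $\textbf{t}\in L^{1}(G\times G)$ with $\Vert f\textbf{t}-\textbf{t}f\Vert$, $\Vert f\circ\textbf{t}-\textbf{t}\circ f\Vert$, $\Vert\pi(\textbf{t})f-f\Vert$ and $\Vert f\pi^{\circ}(\textbf{t})-f\Vert$ all $<\varepsilon$ for $f\in F$. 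By density one may take $F\subset C_{c}(G)$ with supports inside a fixed compact $K$; then pick a Reiter net $\{\xi_{\alpha}\}$ for $G$ (so $\xi_{\alpha}\geq 0$, $\int_{G}\xi_{\alpha}=1$, and $\Vert\delta_{s}*\xi_{\alpha}-\xi_{\alpha}\Vert_{1}\to 0$ uniformly for $s\in K$, which, $G$ being amenable, can be taken approximately right-invariant as well) together with the standard bounded approximate identity $\{e_{U}\}$ of $L^{1}(G)$, and set $\textbf{t}=\tfrac{1}{2}\big(e_{U}(st)\,\xi_{\alpha}(t)+e_{U}(ts)\,\xi_{\alpha}(s)\big)$, the symmetrisation of the diagonal-like element $(s,t)\mapsto e_{U}(st)\xi_{\alpha}(t)$, refined over the directed set of pairs $(\alpha,U)$.

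The necessity half is essentially immediate once \cite{ghahramani 2007} is quoted, so the real obstacle lies in the explicit construction for the sufficiency half: one must keep track of the left action $f\cdot(-)$, the opposite action $f\circ(-)$, the two product morphisms $\pi$ and $\pi^{\circ}$, and the flip, simultaneously and on a single symmetric element, and the resulting estimates involve the modular function $\Delta_{G}$ of $G$. Everything is transparent when $G$ is unimodular (for instance $G$ discrete, compact, or abelian), but in the general case the symmetrisation has to be suitably $\Delta_{G}$-twisted so that both the plain and the $\circ$-conditions of Proposition~\ref{equivalent symmetric} hold for one and the same net. This is precisely the difficulty absorbed by citing \cite{johnson 1996}, which is why I would present the proof in the short reduction form above and, if desired, relegate the hands-on construction to a remark; it is also worth noting that this result contrasts with Example~\ref{not spa}, since it shows that for group algebras amenability already forces symmetric pseudo-amenability.
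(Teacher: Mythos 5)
Your proof is correct and follows essentially the same route as the paper: the necessity direction is obtained by noting that symmetric pseudo-amenability implies pseudo-amenability and quoting \cite{ghahramani 2007}, and the sufficiency direction by quoting Johnson's result \cite{johnson 1996} that $L^1(G)$ is symmetrically amenable for amenable $G$, hence symmetrically pseudo-amenable. The extra hands-on constructions you sketch are not needed and are not part of the paper's argument.
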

\begin{proof}
Suppose that $L^{1}(G)$ is symmetrically pseudo-amenable. So $L^{1}(G)$ is pseudo-amenable, and hence from \cite[Proposition 4.1]{ghahramani 2007} it follows that $G$ is an amenable group. Conversely, if $G$ is amenable, by \cite[Theorem 4.1]{johnson 1996} we have $L^{1}(G)$ is symmetrically pseudo-amenable.
\end{proof}
For any compact group $G$, $L^{2}(G)$ is non-amenable, except in the finite-dimensional cases. In the next theorem, we see that $L^{2}(G)$ is symmetrically pseudo-amenable.
\begin{thm}\label{L2}
For any compact group $G$, the Banach algebra $L^{2}(G)$ is symmetrically pseudo-amenable.
\end{thm}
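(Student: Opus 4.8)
The plan is to realise $L^{2}(G)$, with convolution product and the $L^{2}$-norm, as an $\ell^{2}$-direct sum of full matrix algebras, and then invoke Proposition~\ref{directsum}. First I would check that $L^{2}(G)$ is a Banach algebra: with normalised Haar measure on the compact group $G$ one has $\Vert f*g\Vert_{2}\leq\Vert f\Vert_{1}\Vert g\Vert_{2}\leq\Vert f\Vert_{2}\Vert g\Vert_{2}$, so convolution is bounded. Then I would recall the Peter--Weyl theorem. Let $\widehat{G}$ be the set of equivalence classes of irreducible unitary representations of $G$, each of finite dimension $d_{\pi}$, and for a fixed orthonormal basis of the representation space write $u^{\pi}_{ij}(x)=\langle\pi(x)\xi_{j},\xi_{i}\rangle$ for the matrix coefficients. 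Peter--Weyl says that $\{\sqrt{d_{\pi}}\,u^{\pi}_{ij}\}$ is an orthonormal basis of $L^{2}(G)$, so $L^{2}(G)$ is the Hilbert-space direct sum of the finite-dimensional subspaces $\mathcal{E}_{\pi}=\operatorname{span}\{u^{\pi}_{ij}:1\leq i,j\leq d_{\pi}\}$.

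Next I would unwind the algebraic structure via the Schur orthogonality relations, which give $u^{\pi}_{ij}*u^{\pi'}_{kl}=0$ when $\pi\not\cong\pi'$ and $u^{\pi}_{ij}*u^{\pi}_{kl}=d_{\pi}^{-1}\delta_{jk}u^{\pi}_{il}$. The first identity shows each $\mathcal{E}_{\pi}$ is a two-sided ideal and that distinct blocks annihilate one another, so for $f=\sum_{\pi}f_{\pi}$ and $g=\sum_{\pi}g_{\pi}$ (with $f_{\pi},g_{\pi}\in\mathcal{E}_{\pi}$) one has $f*g=\sum_{\pi}f_{\pi}*g_{\pi}$; that is, the convolution product on $L^{2}(G)$ is exactly the coordinatewise product. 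Since Parseval's identity gives $\Vert f\Vert_{2}^{2}=\sum_{\pi}\Vert f_{\pi}\Vert_{2}^{2}$, the $L^{2}$-norm is precisely the $\ell^{2}$-direct-sum norm, so $L^{2}(G)$ is isometrically isomorphic, as a Banach algebra, to $\bigoplus_{\pi\in\widehat{G}}^{2}\mathcal{E}_{\pi}$. Setting $e^{\pi}_{ij}:=d_{\pi}u^{\pi}_{ij}$ the second identity becomes $e^{\pi}_{ij}*e^{\pi}_{kl}=\delta_{jk}e^{\pi}_{il}$, so $\mathcal{E}_{\pi}\cong M_{d_{\pi}}(\mathbb{C})$ as an algebra (the inherited norm being a scalar multiple of the Hilbert--Schmidt norm, hence equivalent to the usual norm on $M_{d_{\pi}}(\mathbb{C})$). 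By the computation recorded in Example~\ref{not amenable}, $\textbf{t}_{\pi}=\frac{1}{d_{\pi}}\sum_{i,j=1}^{d_{\pi}}e^{\pi}_{ij}\otimes e^{\pi}_{ji}$ is a symmetric diagonal for $\mathcal{E}_{\pi}$ (one checks $a\textbf{t}_{\pi}=\textbf{t}_{\pi}a$ for all $a\in\mathcal{E}_{\pi}$, that $\pi(\textbf{t}_{\pi})$ is the identity of $\mathcal{E}_{\pi}$, and that $\textbf{t}_{\pi}^{\circ}=\textbf{t}_{\pi}$), so each $\mathcal{E}_{\pi}$ is symmetrically amenable, in particular symmetrically pseudo-amenable. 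Applying Proposition~\ref{directsum} with $p=2$ to the family $\{\mathcal{E}_{\pi}\}_{\pi\in\widehat{G}}$ then shows $\bigoplus_{\pi\in\widehat{G}}^{2}\mathcal{E}_{\pi}\cong L^{2}(G)$ is symmetrically pseudo-amenable.

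The norm comparisons and the Schur-orthogonality bookkeeping are routine; the only point needing care is verifying the identification of $L^{2}(G)$ with the $\ell^{2}$-direct sum of the blocks $\mathcal{E}_{\pi}$ in the precise Banach-algebra sense required by Proposition~\ref{directsum} --- namely that distinct blocks multiply to zero (so the product is genuinely coordinatewise) and that the $L^{2}$-norm decomposes as the $\ell^{2}$ sum of the block norms --- and both are immediate consequences of Peter--Weyl, so I do not anticipate a serious obstacle. If one prefers to avoid quoting Proposition~\ref{directsum}, the argument can be run directly: given a finite set $F\subseteq L^{2}(G)$ and $\varepsilon>0$, choose a finite $S\subseteq\widehat{G}$ so that the orthogonal projection $P_{S}$ onto $\bigoplus_{\pi\in S}\mathcal{E}_{\pi}$ satisfies $\Vert P_{S}(a)-a\Vert_{2}<\varepsilon/2$ for $a\in F$, and take $\textbf{t}_{F,\varepsilon}=\sum_{\pi\in S}\frac{1}{d_{\pi}}\sum_{i,j=1}^{d_{\pi}}e^{\pi}_{ij}\otimes e^{\pi}_{ji}$, which is symmetric and satisfies $\Vert a\textbf{t}_{F,\varepsilon}-\textbf{t}_{F,\varepsilon}a\Vert<\varepsilon$ and $\Vert\pi(\textbf{t}_{F,\varepsilon})a-a\Vert<\varepsilon$ for all $a\in F$; but this merely reproduces the proof of Proposition~\ref{directsum} in the present special case.
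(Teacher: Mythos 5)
Your argument is correct and follows essentially the same route as the paper: decompose $L^{2}(G)$ as the $\ell^{2}$-direct sum of finite-dimensional matrix-algebra ideals, note each block is symmetrically amenable via the symmetric diagonal of Example~\ref{not amenable}, and apply Proposition~\ref{directsum}. The only difference is that you derive the block decomposition directly from Peter--Weyl and the Schur orthogonality relations, whereas the paper simply cites Naimark's structure theorem for $L^{2}(G)$ of a compact group.
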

\begin{proof}
By \cite[$\S$ 32. Theorem 1]{naimark}, the group algebra $L^{2}(G)$ ($G$ compact) is the $\ell_2$-direct sum of its minimal two-sided ideals $I_{\alpha}$, each of which is completely isomorphic to an algebra $M_n(\mathbb{C})$ ($n\in \mathbb{N}$, and a matrix for each ideal). We know that each $M_n(\mathbb{C})$ is symmetrically amenable (see Example \ref{not amenable}). Hence from Proposition \ref{directsum} it follows that $L^{2}(G)$ is symmetrically pseudo-amenable.
\end{proof}
We note that according to \cite[Proposition 2.7]{johnson 1996} every strongly amenable $C^*$-algebra is symmetrically amenable and therefore is symmetrically pseudo-amenable.
\section{Jordan derivations of symmetrically pseudo-amenable Banach algebras}
Let $\uu$ be a Banach algebra. In the following, $\uu^{\sharp}$ means the unitization of $\uu$ with the $\ell^{1}$-norm, which we consider in any case, whether $\uu$ is unital or not. The Banach algebra $\uu^{\sharp}$ is unital with unity $e$ where $\Vert e \Vert =1$. Let $X$ be a Banach $\uu$-bimodule and we turn $X$ into a Banach $\uu^{\sharp}$-bimodule by defining $1x=x1=x$ for each $x\in X$, and hence $ex=xe=x$ for each $x\in X$. Let $x\in X$. The mapping $(a,b)\mapsto axb$ from $\uu^{\sharp}\times \uu^{\sharp}$ into $X$ is bilinear and $\Vert axb \Vert\leq M_X \Vert a \Vert \Vert x \Vert \Vert b\Vert$ for all $a,b\in \uu^{\sharp}$, where $M_X=\sup \lbrace \Vert ay \Vert , \Vert ya \Vert \,  : \, a\in \uu, y \in X , \Vert a\Vert=\Vert y \Vert =1\rbrace$. Thus we can define a continuous linear operator $\psi_x : \uu^{\sharp} \otimes \uu^{\sharp}\rightarrow X$ by $\psi_x(a\otimes b)=axb$ for all $a,b \in \uu^{\sharp}$. It is clear that $\Vert \psi_x \Vert\leq M_X \Vert x\Vert$. Let $T:\uu\rightarrow X$ be a bounded linear map, and and we extend $T$ to $\uu^{\sharp}$ by putting $T(1) = 0$. So $T(e)=0$. Then $\Phi_{T}: \uu^{\sharp} \otimes \uu^{\sharp} \rightarrow X$ is the bounded linear mapping  specified by $\Phi_T(a\otimes b)=aT(b)$ with $\Vert  \Phi_T\Vert \leq\Vert T\Vert$
\par 
Now we are ready to state the main results of this section. In the following theorems, it is assumed that $\psi_{x}$ and $\Phi_{T}$ are defined as above.
\begin{thm}\label{jordan}
Let $\uu$ be a Banach algebra such that $\uu^{\sharp}$ is symmetrically pseudo-amenable with the a symmetric approximate diagonal $\lbrace \textbf{t}_{\lambda}\rbrace_{\lambda\in\Lambda}$. Suppose $X$ is a Banach $\uu$-bimodule such that 
\begin{itemize}
\item[(i)] for each $x\in X$ the net $\lbrace\psi_{x} (\textbf{t}_{\lambda})\rbrace_{\lambda\in\Lambda}$ is bounded, and
\item[(ii)] for each bounded Jordan derivation $D:\uu \rightarrow X$ the net $\lbrace\Phi_{D}(\textbf{t}_{\lambda})\rbrace_{\lambda\in\Lambda}$ is bounded.
\end{itemize}
Then every bounded Jordan derivation from $ \uu $ to $ X $ is a derivation.
\end{thm}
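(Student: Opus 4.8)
The plan is to follow the classical Johnson template from \cite[Theorem 6.2]{johnson 1996}, but replacing the bounded approximate diagonal by the (possibly unbounded) symmetric approximate diagonal $\{\textbf{t}_\lambda\}$, using hypotheses (i) and (ii) precisely to keep the relevant nets under control where boundedness was previously automatic. First I would extend the given bounded Jordan derivation $D:\uu\to X$ to $\uu^\sharp$ by $D(e)=0$; a standard check shows the extension is still a Jordan derivation (the unit is a symmetric idempotent, so the Jordan identity at $e$ forces nothing beyond $D(e)=0$). So without loss of generality I work over the unital algebra $\mathfrak{U}:=\uu^\sharp$ and its bimodule $X$, with $ex=xe=x$.

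The core computation is to analyze how $\Phi_D$ interacts with the module actions and the flip. Write $\textbf{t}_\lambda=\sum_k a_k^\lambda\otimes b_k^\lambda$. The key algebraic identity to establish is a ``Leibniz-type'' relation expressing $\Phi_D(a\,\textbf{t}_\lambda - \textbf{t}_\lambda\, a)$ and $\Phi_D(a\circ\textbf{t}_\lambda - \textbf{t}_\lambda\circ a)$ in terms of $a$, $D(a)$, $\psi_x(\textbf{t}_\lambda)$ for suitable $x$, and $\Phi_D(\textbf{t}_\lambda)$. Concretely, using $D(ab+ba)=D(a)b+aD(b)+D(b)a+bD(a)$ one rewrites $\sum_k a D(a_k^\lambda b_k^\lambda) \cdots$; combining $\textbf{t}_\lambda$ with its flip (symmetry: $\textbf{t}_\lambda^\circ=\textbf{t}_\lambda$) is exactly what converts the Jordan identity into usable terms, since $\Phi_D$ applied to $\textbf{t}_\lambda$ and to $\textbf{t}_\lambda^\circ$ captures both $aD(b)$ and $bD(a)$ pieces. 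One then passes to the limit: conditions (ii) and (iv) of Proposition \ref{equivalent symmetric} give $\pi(\textbf{t}_\lambda)a\to a$ and $a\pi^\circ(\textbf{t}_\lambda)\to a$ (and the remarks after that proposition give $a\pi(\textbf{t}_\lambda)\to a$, $\pi^\circ(\textbf{t}_\lambda)a\to a$), conditions (i) and (iii) give the commutator terms $a\textbf{t}_\lambda-\textbf{t}_\lambda a\to 0$ and $a\circ\textbf{t}_\lambda-\textbf{t}_\lambda\circ a\to 0$, and the boundedness hypotheses (i), (ii) of the theorem ensure that applying the bounded operators $\psi_x$ and $\Phi_D$ to these vanishing nets still yields vanishing nets (an unbounded operator applied to a null net need not go to zero, which is why these hypotheses are needed and why pseudo-amenability alone would not suffice). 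Taking limits in the identity for $D(ab)-D(a)b-aD(b)$ should then collapse all surviving terms to $0$, giving the derivation property.

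The main obstacle I expect is bookkeeping the module actions of $\uu^\sharp$ versus $\uu$ correctly inside $\Phi_D$ — in particular, the extension $D(e)=0$ must be used to discard the spurious terms that appear when $a$ or the tensor legs land on the adjoined identity, and one must verify that $\Phi_D$ and $\psi_x$ are compatible with the $\circ$-actions and $\pi^\circ$ in the way the limit argument demands (e.g.\ an identity like $\Phi_D(\textbf{t}\circ a)$ versus $\Phi_D(\textbf{t})$ composed with right multiplication). A secondary subtlety is making sure the intermediate quantities whose limits are taken are genuinely of the form (bounded operator)(null net) or (bounded net)$\cdot$(null net); wherever an unbounded piece like $\pi(\textbf{t}_\lambda)$ or $\Phi_D(\textbf{t}_\lambda)$ appears, it must be paired with something converging to a fixed element (so the product converges) rather than with something merely tending to $0$. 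Once the right algebraic identity is in hand, the passage to the limit is routine given Proposition \ref{equivalent symmetric} and hypotheses (i)--(ii).
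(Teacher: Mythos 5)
Your opening moves match the paper's: extend $D$ to $\uu^{\sharp}$ by $D(1)=0$, apply $\Phi_D$ to $a\textbf{t}_{\lambda}-\textbf{t}_{\lambda}a$, and use the symmetry $\textbf{t}_{\lambda}^{\circ}=\textbf{t}_{\lambda}$ together with the Jordan identity to get the identity
$\Phi_D(a\textbf{t}_{\lambda}-\textbf{t}_{\lambda}a)=a\Phi_D(\textbf{t}_{\lambda})+\Phi_D(a\circ\textbf{t}_{\lambda})-\pi(\textbf{t}_{\lambda})D(a)-\Phi_D(\textbf{t}_{\lambda})a-\Phi_D(\textbf{t}_{\lambda}\circ a)-\psi_{D(a)}(\textbf{t}_{\lambda})$.
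But your endgame has a genuine gap. Taking limits in this identity does \emph{not} ``collapse all surviving terms to $0$'' and yield $D(ab)=D(a)b+aD(b)$ directly; it yields only
$D(a)=\lim_{\lambda}\bigl((ax_{\lambda}-x_{\lambda}a)-\psi_{D(a)}(\textbf{t}_{\lambda})\bigr)$ with $x_{\lambda}=\Phi_D(\textbf{t}_{\lambda})$, and the residual term $\psi_{D(a)}(\textbf{t}_{\lambda})$ does not tend to zero in general (it converges to $D(a)$ only when $D$ is central-valued, which is the separate Theorem \ref{jordan 2}). The actual work of the paper's proof begins at this point: embed $X$ in $X^{**}$, use the boundedness in (i) and (ii) to take generalized weak-$*$ limits defining $\Omega\in X^{**}$ and a bounded map $\Delta$ with $D(a)=a\Omega-\Omega a-\Delta(a)$; show $\Delta$ is central, i.e. $a\Delta(b)=\Delta(b)a$, using $a\textbf{t}_{\lambda}-\textbf{t}_{\lambda}a\to 0$ together with boundedness of the nets $\psi_{D(b)}(a\textbf{t}_{\lambda})$, $\psi_{D(b)}(\textbf{t}_{\lambda}a)$; then rerun the same construction on the Jordan derivation $\Delta$, where centrality forces $\psi_{\Delta(a)}(\textbf{t}_{\lambda})=\pi(\textbf{t}_{\lambda})\Delta(a)\to\Delta(a)$, giving $2\Delta(a)=a\Omega_{1}-\Omega_{1}a$, so $\Delta$ and hence $D$ is inner into $X^{**}$ and therefore a derivation. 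This centrality-plus-bootstrap argument is the heart of the proof and is entirely absent from your plan.

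Relatedly, you misidentify what hypotheses (i) and (ii) are for. The operators $\psi_{x}$ and $\Phi_D$ on $\uu^{\sharp}\widehat{\otimes}\uu^{\sharp}$ are automatically bounded ($\Vert\psi_{x}\Vert\leq M_X\Vert x\Vert$, $\Vert\Phi_D\Vert\leq\Vert D\Vert$), so applying them to the null nets $a\textbf{t}_{\lambda}-\textbf{t}_{\lambda}a$ and $a\circ\textbf{t}_{\lambda}-\textbf{t}_{\lambda}\circ a$ gives null nets with no extra assumption. What is \emph{not} automatic when $\lbrace\textbf{t}_{\lambda}\rbrace$ is unbounded is the boundedness of the evaluated nets $\lbrace\Phi_D(\textbf{t}_{\lambda})\rbrace$ and $\lbrace\psi_{x}(\textbf{t}_{\lambda})\rbrace$; that is exactly what (i) and (ii) supply, and it is needed to form the generalized limits $\Omega$, $\Delta(a)$ and to justify the centrality computation. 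As written, your proposal stops essentially at the identity \eqref{lim} of the paper and does not reach the conclusion.
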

\begin{proof}
Suppose that $D:\uu \rightarrow X$  is a bounded Jordan derivation. We extend $D$ to $\uu^{\sharp}$ by putting $D(1) = 0$. So $D(e)=0$, where $e$ is the unity of $\uu^{\sharp}$. Then 
\[\Phi_D(a\textbf{t}_{\lambda}-\textbf{t}_{\lambda}a)=a\Phi_D(\textbf{t}_{\lambda}) +\Phi_D(a\circ \textbf{t}_{\lambda})-\pi(\textbf{t}_{\lambda})D(a)-\Phi_D(\textbf{t}_{\lambda})a-\Phi_D( \textbf{t}_{\lambda}\circ a)-\psi_{D(a)}(\textbf{t}_{\lambda}),\]
for each $a\in \uu^{\sharp}$. Let $x_{\lambda}:=\Phi_D(\textbf{t}_{\lambda})$. So
\[\pi(\textbf{t}_{\lambda})D(a)=(ax_{\lambda}-x_{\lambda}a)-\Phi_D(a\textbf{t}_{\lambda}-\textbf{t}_{\lambda}a)+\Phi_D(a\circ \textbf{t}_{\lambda} -\textbf{t}_{\lambda}\circ a) -\psi_{D(a)}(\textbf{t}_{\lambda}),\]
for each $a\in \uu^{\sharp}$. We have $\pi(\textbf{t}_{\lambda})\longrightarrow e$ and since $X$ is unital over $\uu^{\sharp}$, $\pi(\textbf{t}_{\lambda})D(a)\longrightarrow D(a)$ for $a\in \uu^{\sharp}$. On the other hand
\[\Vert \Phi_D(a\textbf{t}_{\lambda}-\textbf{t}_{\lambda}a)\Vert\leq \Vert D\Vert \Vert a\textbf{t}_{\lambda}-\textbf{t}_{\lambda}a \Vert \longrightarrow 0 \]
and 
\[\Vert \Phi_D(a\circ \textbf{t}_{\lambda} -\textbf{t}_{\lambda}\circ a)\Vert\leq \Vert D\Vert \Vert a\circ \textbf{t}_{\lambda} -\textbf{t}_{\lambda}\circ a \Vert \longrightarrow 0 \]
for $a\in \uu^{\sharp}$. Therefore, 
\begin{equation}\label{lim}
D(a)=\lim _{\lambda}((ax_{\lambda}-x_{\lambda}a)-\psi_{D(a)}(\textbf{t}_{\lambda}))
\end{equation} 
for $a\in \uu^{\sharp}$. Now, viewing $ X $  as a closed $\uu^{\sharp}$-subbimodule of $X^{**}$, and hence $ D $ is a bounded Jordan derivation from $ \uu^{\sharp} $ to $ X^{**} $. Since $\lbrace x_{\lambda}\rbrace_{\lambda\in \Lambda}$ is bounded, define $ \Omega\in X^{**} $ as follows:
\[ \langle\Omega,f\rangle=Lim_{\lambda}\langle x_{\lambda},f\rangle, \]
 where $f\in X^{*}  $ and $ Lim_{\lambda} $ is a generalized limit on $ \Lambda $. Also, by our assumption, define the bounded linear map $ \Delta:\uu^{\sharp}\rightarrow X^{**} $ by
 \[\langle\Delta(a),f\rangle=Lim_{\lambda}\langle \psi_{D(a)}(\textbf{t}_{\lambda}),f\rangle ,\]
where $a\in\uu^{\sharp}$ and $f\in X^{*} $. It follows from \eqref{lim} that 
\begin{equation*}
\begin{split}
\langle D(a),f \rangle &=Lim_{\lambda}\langle ax_{\lambda}-x_{\lambda}a, f\rangle -Lim_{\lambda}\langle \psi_{D(a)}(\textbf{t}_{\lambda}) ,f\rangle \\&
= \langle a\Omega -\Omega a , f\rangle -\langle\Delta(a),f\rangle
\end{split}
\end{equation*}
for any $a\in\uu^{\sharp}$ and $f\in X^{*} $. So
\[D(a)=a\Omega -\Omega a-\Delta(a)\]
for each $a\in\uu^{\sharp}$, and hence $ \Delta $ is a bounded Jordan derivation. For each $a,b \in \uu^{\sharp}$ we have 
\begin{equation*}
\begin{split}
\langle a\Delta (b),f  \rangle &= Lim_{\lambda}\langle \psi_{D(b)}(\textbf{t}_{\lambda}),fa\rangle\\&
= Lim_{\lambda}\langle a\psi_{D(b)}(\textbf{t}_{\lambda}),f\rangle\\&
=Lim_{\lambda}\langle \psi_{D(b)}(a\textbf{t}_{\lambda}),f\rangle\\&
=Lim_{\lambda}\langle \psi_{D(b)}(\textbf{t}_{\lambda}a),f\rangle\\&
=Lim_{\lambda}\langle \psi_{D(b)}(\textbf{t}_{\lambda})a,f\rangle\\&
=\langle \Delta (b) a,f  \rangle
\end{split}
\end{equation*}
for all $f\in X^{*} $, because $a\textbf{t}_{\lambda}-\textbf{t}_{\lambda}a\longrightarrow 0$ and the nets $\lbrace\psi_{D(b)} (a\textbf{t}_{\lambda})\rbrace_{\lambda\in\Lambda}$, $\lbrace\psi_{D(b)} (\textbf{t}_{\lambda}a)\rbrace_{\lambda\in\Lambda}$ are bounded. Thus
\begin{equation}\label{commutative}
a\Delta (b)=\Delta (b) a
\end{equation}
for each $a,b \in \uu^{\sharp}$. Now we do the same process for $ \Delta $ as we did earlier for $ D$ and therefore
\[ \Delta(a)=a\Omega_{1}-\Omega_{1}a-\Delta_{1}(a)  \]
for $a\in \uu^{\sharp}$, where $ \Omega_{1}\in X^{**} $ and $ \Delta_{1} $ is a bounded linear map from $ \uu $ to $ X^{**} $ defined by
\[  \langle\Delta_{1}(a),f\rangle=Lim_{\lambda}\langle \psi_{\Delta(a)}(\textbf{t}_{\lambda}),f\rangle \]
for $a\in\uu^{\sharp}$ and $f\in X^{*} $ (By condition (ii) of our assumption $\Delta_{1}$ is well-defined). It follows from \eqref{commutative} that 
\[\psi_{\Delta(a)}(\textbf{t})=\pi(\textbf{t})\Delta(a) \]
for each $a\in \uu^{\sharp}$ and $\textbf{t}\in  \uu^{\sharp} \otimes \uu^{\sharp}$. So $\psi_{\Delta(a)}(\textbf{t}_{\lambda})\longrightarrow \Delta(a)$. Consequentially, $\Delta_{1}(a)=\Delta(a)$ for all $a\in\uu^{\sharp}$ and 
\[ \Delta(a)=a(\dfrac{1}{2}\Omega_{1})-(\dfrac{1}{2}\Omega_{1})a \]
for $a\in\uu^{\sharp}$. According to this identity and $D(a)=a\Omega -\Omega a-\Delta(a)$ we have
 \[D(a)=a(\Omega-\dfrac{1}{2}\Omega_{1})-(\Omega-\dfrac{1}{2}\Omega_{1})a \]
for $a\in\uu^{\sharp}$, and hence  $ D $ is a derivation.
\end{proof}
We note that if $\uu$ is a commutative approximately amenable Banach algebra, then by \cite[Theorem 3.1-(iv)]{ghahramani 2007} and Corollary \ref{symmetric commutative}, $\uu^{\sharp}$ is symmetrically pseudo-amenable. Examples of this kind of Banach algebras are given in \cite{ghahramani 2008}.
\par 
We have the following result that is proved in \cite[Theorem 6.2]{johnson 1996}. Therefore, it can be said that Theorem \ref{jordan} is a generalization of \cite[Theorem 6.2]{johnson 1996}.
\begin{cor}\label{jordan johnson}
Let $\uu$ be a symmetrically amenable Banach algebra and $X$ be a Banach $\uu$-bimodule. Then every bounded Jordan derivation from $ \uu $ to $ X $ is a derivation.
\end{cor}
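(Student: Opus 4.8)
The plan is to obtain this as an immediate consequence of Theorem \ref{jordan}: the only thing to do is to produce, for a symmetrically amenable $\uu$, a symmetric approximate diagonal of $\uu^{\sharp}$ meeting conditions (i) and (ii) of that theorem, and then invoke it.

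First I would reduce to the unital case. Since $\uu$ is symmetrically amenable, so is its unitization $\uu^{\sharp}$; this is the symmetric counterpart of the standard fact that amenability passes to and from the unitization, and it is established in \cite{johnson 1996}. Consequently $\uu^{\sharp}$ admits a symmetric approximate diagonal $\{\textbf{t}_{\lambda}\}_{\lambda\in\Lambda}$ which is moreover \emph{bounded}, say with $M:=\sup_{\lambda}\Vert\textbf{t}_{\lambda}\Vert<\infty$; in particular $\uu^{\sharp}$ is symmetrically pseudo-amenable in the sense demanded by Theorem \ref{jordan}.

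Next I would check that conditions (i) and (ii) of Theorem \ref{jordan} are then automatic, precisely because the diagonal is norm-bounded. With $\psi_{x}$ and $\Phi_{T}$ as set up at the beginning of Section 5 one has $\Vert\psi_{x}\Vert\leq M_{X}\Vert x\Vert$ and $\Vert\Phi_{T}\Vert\leq\Vert T\Vert$, so that $\Vert\psi_{x}(\textbf{t}_{\lambda})\Vert\leq M_{X}\Vert x\Vert\,M$ for every $x\in X$ and $\Vert\Phi_{D}(\textbf{t}_{\lambda})\Vert\leq\Vert D\Vert\,M$ for every bounded Jordan derivation $D:\uu\to X$, uniformly in $\lambda$. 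Hence the nets $\{\psi_{x}(\textbf{t}_{\lambda})\}_{\lambda}$ and $\{\Phi_{D}(\textbf{t}_{\lambda})\}_{\lambda}$ are bounded for \emph{every} Banach $\uu$-bimodule $X$, and Theorem \ref{jordan} applies verbatim, yielding that every bounded Jordan derivation from $\uu$ into $X$ is a derivation, which is the assertion.

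I do not anticipate a genuine obstacle here: all the substance is carried by Theorem \ref{jordan}, and the reduction above is routine. The one ingredient that is not purely formal is the transfer of symmetric amenability to $\uu^{\sharp}$; if one prefers not to quote it, one can instead adapt the unitization argument of \cite{johnson 1996} to manufacture a bounded symmetric approximate diagonal for $\uu^{\sharp}$ out of one for $\uu$ and verify by inspection that it is symmetric, norm-bounded, and an approximate diagonal for $\uu^{\sharp}$.
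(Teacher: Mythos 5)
Your argument is correct and is essentially identical to the paper's own proof: the paper likewise invokes \cite[Theorem 3.1]{johnson 1996} to pass symmetric amenability to $\uu^{\sharp}$, takes a bounded symmetric approximate diagonal there, and uses the norm bounds $\Vert\psi_{x}\Vert\leq M_{X}\Vert x\Vert$ and $\Vert\Phi_{D}\Vert\leq\Vert D\Vert$ to see that conditions (i) and (ii) of Theorem \ref{jordan} hold automatically. No issues.
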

\begin{proof}
By \cite[Theorem 3.1]{johnson 1996}, $\uu^{\sharp}$ is symmetrically amenable, and so it has a bounded symmetric approximate diagonal $\lbrace \textbf{t}_{\lambda}\rbrace_{\lambda\in\Lambda}$. Since $\psi_x : \uu^{\sharp} \otimes \uu^{\sharp}\rightarrow X$ for each $x\in X$ and $\Phi_{T}: \uu^{\sharp} \otimes \uu^{\sharp}\rightarrow X$ for each bounded linear map $T:\uu^{\sharp}\rightarrow X$ are bounded mappings, it follows from boundness of $\lbrace \textbf{t}_{\lambda}\rbrace_{\lambda\in\Lambda}$ that the conditions (i) and (ii) of Theorem \ref{jordan} are satisfied. Therefore, the desired result is proved.
\end{proof}
In the following theorem, we consider a special type of Jordan derivations.  
\begin{thm}\label{jordan 2}
Let $\uu$ be a Banach algebra such that $\uu^{\sharp}$ is symmetrically pseudo-amenable and $X$ is a Banach $\uu$-bimodule. Then every bounded central Jordan derivation from $ \uu $ to $ X $ is a derivation.
\end{thm}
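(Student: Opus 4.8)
The plan is to reduce this to Theorem \ref{jordan} by showing that, for a bounded \emph{central} Jordan derivation $D$, conditions (i) and (ii) of Theorem \ref{jordan} are automatically satisfied, so no boundedness hypothesis on $X$ is needed. First I would extend $D$ to $\uu^{\sharp}$ by $D(1)=0$; since $D(\uu)\subseteq \mathcal{Z}_{\uu}(X)$ and $1$ acts as the identity on $X$, the extended map is still a central Jordan derivation into $X$. The key observation is that for a central map $T$ with $T(\uu^{\sharp})\subseteq \mathcal{Z}_{\uu^{\sharp}}(X)$ one has, for every $a,b\in\uu^{\sharp}$, $a\,T(b)\,1 = T(b)\,a\,1 = T(b)\,a$, so on simple tensors $\psi_{T(b)}(a\otimes c)=a\,T(b)\,c = T(b)\,ac = T(b)\,\pi(a\otimes c)$; by linearity and continuity $\psi_{T(b)}(\textbf{t})=T(b)\pi(\textbf{t})$ for all $\textbf{t}\in\uu^{\sharp}\widehat{\otimes}\uu^{\sharp}$. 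Likewise $\Phi_D(a\otimes c)=a\,D(c)=D(c)\,a$ — wait, this is not quite symmetric, but what we actually need is only that the relevant nets are bounded. Since $\pi(\textbf{t}_{\lambda})\to e$, the net $\{\pi(\textbf{t}_{\lambda})\}$ is bounded, and therefore $\{\psi_x(\textbf{t}_{\lambda})\}$ need not be controlled for arbitrary $x$; instead I would observe that we only ever apply $\psi$ to $\psi_{D(a)}$ with $D(a)$ central, and $\psi_{D(a)}(\textbf{t}_{\lambda})=D(a)\pi(\textbf{t}_{\lambda})\to D(a)$, which is automatically bounded. Similarly $\Phi_D(\textbf{t}_\lambda)$ needs handling.

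Concretely, I would not cite Theorem \ref{jordan} as a black box but instead rerun its argument, using centrality to make every auxiliary net convergent rather than merely bounded. Tracking the proof of Theorem \ref{jordan}: the identity
\[
\Phi_D(a\textbf{t}_{\lambda}-\textbf{t}_{\lambda}a)=a\Phi_D(\textbf{t}_{\lambda}) +\Phi_D(a\circ \textbf{t}_{\lambda})-\pi(\textbf{t}_{\lambda})D(a)-\Phi_D(\textbf{t}_{\lambda})a-\Phi_D( \textbf{t}_{\lambda}\circ a)-\psi_{D(a)}(\textbf{t}_{\lambda})
\]
still holds. Using $\psi_{D(a)}(\textbf{t}_{\lambda})=D(a)\pi(\textbf{t}_{\lambda})\to D(a)$ and $\pi(\textbf{t}_{\lambda})D(a)\to D(a)$, together with the fact that $\Phi_D$ is bounded and $a\textbf{t}_{\lambda}-\textbf{t}_{\lambda}a\to 0$, $a\circ\textbf{t}_{\lambda}-\textbf{t}_{\lambda}\circ a\to 0$, one gets
\[
D(a)=\lim_\lambda\bigl(a\,\Phi_D(\textbf{t}_{\lambda})-\Phi_D(\textbf{t}_{\lambda})\,a\bigr)+D(a)-D(a),
\]
that is, $D(a)=\lim_\lambda\bigl(a x_{\lambda}-x_{\lambda}a\bigr)$ with $x_{\lambda}=\Phi_D(\textbf{t}_{\lambda})$, once one checks the telescoping of the two $\pm D(a)$ terms is legitimate after passing the limit through. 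This already shows $D$ is approximately inner; but to conclude $D$ is genuinely a derivation I would pass to $X^{**}$: the net $\{\psi_{D(a)}(\textbf{t}_{\lambda})\}$ being norm-convergent (hence bounded) lets me define $\Delta(a)$ as in Theorem \ref{jordan}, and since $\psi_{D(a)}(\textbf{t}_{\lambda})\to D(a)$ we get $\Delta=D$ as a map into $X^{**}$, so $D(a)=a\Omega-\Omega a-\Delta(a)=a\Omega-\Omega a-D(a)$, giving $D(a)=\tfrac12(a\Omega-\Omega a)$, an inner — in particular ordinary — derivation.

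The main obstacle is the circular-looking step $D(a)=a\Omega-\Omega a - D(a)$: one must be careful that the $\Omega\in X^{**}$ produced from $\{\Phi_D(\textbf{t}_\lambda)\}$ is well-defined, i.e. that $\{\Phi_D(\textbf{t}_\lambda)\}$ is bounded. This is exactly where centrality must be invoked once more: $\Phi_D(\textbf{t}_\lambda)$ need not converge, but in the central case I would instead exploit the identity $\psi_{D(a)}(\textbf{t})=\pi(\textbf{t})D(a)$ to rewrite the whole recursion so that the only net that enters the definition of $\Omega$ is $\{\psi_{D(a)}(\textbf{t}_\lambda)\}_\lambda$ (which converges to $D(a)$) rather than $\{\Phi_D(\textbf{t}_\lambda)\}$. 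In effect, centrality forces $\Delta_1=\Delta$ at the \emph{first} stage already, collapsing the two-stage argument of Theorem \ref{jordan} into one: we obtain directly $\Delta(a)=a\Omega_1-\Omega_1 a-\Delta(a)$, hence $\Delta(a)=\tfrac12(a\Omega_1-\Omega_1 a)$ and then $D(a)=a\Omega-\Omega a-\tfrac12(a\Omega_1-\Omega_1 a)$, so $D$ is a derivation. I would write the argument so that boundedness of the only relevant net is a consequence of $\pi(\textbf{t}_\lambda)\to e$ and centrality, which is precisely why conditions (i), (ii) of Theorem \ref{jordan} can be dropped here.
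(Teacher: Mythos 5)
Your opening reduction is sound and, up to the point where you obtain the limit formula, it coincides with the paper's proof: the identity $D(a)=\lim_{\lambda}\bigl((ax_{\lambda}-x_{\lambda}a)-\psi_{D(a)}(\textbf{t}_{\lambda})\bigr)$ from Theorem \ref{jordan} is derived without ever using the boundedness hypotheses (i)--(ii), and centrality gives $\psi_{D(a)}(\textbf{t})=\pi(\textbf{t})D(a)$, hence $\psi_{D(a)}(\textbf{t}_{\lambda})\to D(a)$ because $\pi(\textbf{t}_{\lambda})\to e$. (A small slip here: both $\pi(\textbf{t}_{\lambda})D(a)$ and $\psi_{D(a)}(\textbf{t}_{\lambda})$ occur with the \emph{same} sign and both tend to $D(a)$, so the correct conclusion is $\lim_{\lambda}(ax_{\lambda}-x_{\lambda}a)=2D(a)$, i.e. $D(a)=\tfrac12\lim_{\lambda}(ax_{\lambda}-x_{\lambda}a)$, not $D(a)=\lim_{\lambda}(ax_{\lambda}-x_{\lambda}a)$; you later recover the factor $\tfrac12$ in the bidual computation, so your two versions are inconsistent, though the constant is harmless for the conclusion.)

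The genuine gap is in your concluding step. To finish you pass to $X^{**}$ and need $\Omega$ (and later $\Omega_{1}$) defined as a generalized limit of $\{x_{\lambda}\}=\{\Phi_{D}(\textbf{t}_{\lambda})\}$; that requires exactly the boundedness condition (ii) of Theorem \ref{jordan}, which Theorem \ref{jordan 2} deliberately drops, and nothing in the hypotheses of Theorem \ref{jordan 2} makes $\{\Phi_{D}(\textbf{t}_{\lambda})\}$ bounded. Your proposed repair --- ``rewrite the recursion so that only $\{\psi_{D(a)}(\textbf{t}_{\lambda})\}$ enters the definition of $\Omega$'' --- is not carried out and cannot work as stated, since $\Omega$ must be a single element of $X^{**}$ independent of $a$, whereas those nets depend on $a$; it is still the $a$-independent net $\{\Phi_{D}(\textbf{t}_{\lambda})\}$ whose cluster point you would need. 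In fact no bidual argument is needed at all, and this is how the paper closes the proof: the formula $D(a)=\tfrac12\lim_{\lambda}(ax_{\lambda}-x_{\lambda}a)$ exhibits $D$ as a pointwise norm limit of the inner derivations $a\mapsto\tfrac12(ax_{\lambda}-x_{\lambda}a)$, and since the module actions are continuous one gets $D(ab)=D(a)b+aD(b)$ directly by passing to the limit in $\tfrac12(ab\,x_{\lambda}-x_{\lambda}\,ab)=\tfrac12(ax_{\lambda}-x_{\lambda}a)b+a\,\tfrac12(bx_{\lambda}-x_{\lambda}b)$. Replacing your $X^{**}$ paragraph by this one-line observation turns your proposal into a complete proof, essentially identical to the paper's.
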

\begin{proof}
Suppose that $D:\uu \rightarrow X$  is a bounded central Jordan derivation and $\lbrace \textbf{t}_{\lambda}\rbrace_{\lambda\in\Lambda}$ is a symmetric approximate diagonal for $\uu^{\sharp}$. We extend $D$ to $\uu^{\sharp}$ by putting $D(1) = 0$. With the same process of proving Theorem \ref{jordan}, it is proved that
\[D(a)=\lim _{\lambda}((ax_{\lambda}-x_{\lambda}a)-\psi_{D(a)}(\textbf{t}_{\lambda})) \]
for $a\in \uu^{\sharp}$. Since $D$ is central, it follows that 
\[\psi_{D(a)}(\textbf{t})=\pi(\textbf{t})D(a) \]
for each $a\in \uu^{\sharp}$ and $\textbf{t}\in  \uu^{\sharp} \otimes \uu^{\sharp}$. Thus $\psi_{D(a)}(\textbf{t}_{\lambda})\longrightarrow D(a)$ and 
\[ D(a)=\dfrac{1}{2}\lim _{\lambda}(ax_{\lambda}-x_{\lambda}a)  \]
for $a\in\uu^{\sharp}$. 
\end{proof}
Let $\uu$ be a Banach algebra. The Banach $\uu$-bimodule $X$ is called \textit{symmetric} if $ax = x a$, for all $a \in \uu$ and $x \in X$. According to Theorem \ref{jordan 2}, we have the following result which checks the bounded Jordan derivations into a certain class of Banach bimodules.
\begin{cor}\label{symetric Jordan}
If $\uu$ is a Banach algebra such that $\uu^{\sharp}$ is symmetrically pseudo-amenable, then every bounded Jordan derivation from $ \uu $ to a symmetric Banach $\uu$-bimodule $ X $ is a derivation.
\end{cor}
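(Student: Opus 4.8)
The plan is to derive this immediately from Theorem~\ref{jordan 2} by observing that on a symmetric bimodule every Jordan derivation is automatically central. First I would recall that $X$ being symmetric means $ax = xa$ for all $a \in \uu$ and $x \in X$; in particular, for \emph{any} linear map $D \colon \uu \rightarrow X$ we trivially have $D(\uu) \subseteq X = \mathcal{Z}_{\uu}(X)$, so $D$ is central in the sense of the definition in Section~2. Hence a bounded Jordan derivation $D \colon \uu \rightarrow X$ is a bounded \emph{central} Jordan derivation.

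Next I would simply invoke Theorem~\ref{jordan 2}: since $\uu^{\sharp}$ is symmetrically pseudo-amenable and $X$ is a Banach $\uu$-bimodule, every bounded central Jordan derivation from $\uu$ to $X$ is a derivation. Applying this to our $D$ gives the conclusion. So the proof is essentially one line of invocation, and there is no real obstacle: the only thing to check is the trivial observation that symmetry of the bimodule forces centrality of every map into it.

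If one wanted to be slightly more self-contained, one could instead note that the defining identities simplify on a symmetric bimodule: for a Jordan derivation $D$ the equation $D(ab+ba) = D(a)b + aD(b) + D(b)a + bD(a)$ becomes, after using $xa = ax$ on each term, $D(ab+ba) = 2(D(a)b + aD(b))$ — but this route is unnecessary since Theorem~\ref{jordan 2} already does the work. I would therefore keep the proof to its shortest form, citing Theorem~\ref{jordan 2} directly. A representative write-up:

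\begin{proof}
Let $D \colon \uu \rightarrow X$ be a bounded Jordan derivation. Since $X$ is symmetric, $ax = xa$ for all $a \in \uu$ and $x \in X$, so $X = \mathcal{Z}_{\uu}(X)$ and in particular $D(\uu) \subseteq \mathcal{Z}_{\uu}(X)$; that is, $D$ is central. As $\uu^{\sharp}$ is symmetrically pseudo-amenable, Theorem~\ref{jordan 2} applies and shows that $D$ is a derivation.
\end{proof}
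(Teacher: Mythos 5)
Your proposal is correct and is exactly the route the paper takes: the corollary is stated as an immediate consequence of Theorem~\ref{jordan 2}, the only observation needed being that symmetry of $X$ gives $X=\mathcal{Z}_{\uu}(X)$, so every bounded Jordan derivation into $X$ is automatically central.
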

\section{Lie derivations of symmetrically pseudo-amenable Banach algebras}
In this section, we consider $\uu^{\sharp}$ as in the previous section and convert a $\uu$-bimodule to a $\uu^{\sharp}$-bimodule. Also, $\Phi_{T}$ is defined as in the previous section. The following lemma is about central derivations.
\begin{lem}\label{central deri}
Let $\uu$ be a Banach algebra such that $\uu^{\sharp}$ is symmetrically pseudo-amenable and $X$ be a Banach $\uu$-bimodule. Then every bounded central derivation from $ \uu $ to $ X $ is a derivation.
\end{lem}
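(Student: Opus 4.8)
The plan is to adapt the argument of Theorem~\ref{jordan 2} to the present setting, using only that a derivation is in particular a Jordan derivation, so that the Leibniz rule for $D$ is \emph{derived} rather than presupposed. First I would extend the given bounded central derivation $D$ to $\uu^{\sharp}$ by setting $D(e)=0$; it remains a bounded central derivation, and since $\mathcal{Z}_{\uu}(X)=\mathcal{Z}_{\uu^{\sharp}}(X)$ its centrality is preserved. Let $\{\T\}_{\lambda\in\Lambda}$ be a symmetric approximate diagonal for $\uu^{\sharp}$ and put $x_{\lambda}:=\Phi_{D}(\T)$.

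Because $D$ satisfies the Jordan identity, the computation in the proof of Theorem~\ref{jordan} applies without change and gives the identity~\eqref{lim}, namely
\[ D(a)=\lim_{\lambda}\big((a x_{\lambda}-x_{\lambda}a)-\psi_{D(a)}(\T)\big) \qquad (a\in\uu^{\sharp}). \]
At this point I would invoke centrality. For $x\in\mathcal{Z}_{\uu^{\sharp}}(X)$ one has $bxc=(bc)x$ for all $b,c\in\uu^{\sharp}$, so $\psi_{D(a)}(\textbf{t})=\pi(\textbf{t})D(a)$ for every $\textbf{t}\in\uu^{\sharp}\widehat{\otimes}\uu^{\sharp}$; in particular $\psi_{D(a)}(\T)=\pi(\T)D(a)\longrightarrow D(a)$ in norm, by condition (ii) of Proposition~\ref{equivalent symmetric} together with the unitality of $X$ over $\uu^{\sharp}$. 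Substituting into the previous display shows that $a x_{\lambda}-x_{\lambda}a\longrightarrow 2D(a)$ in norm, that is,
\[ D(a)=\tfrac{1}{2}\,\lim_{\lambda}(a x_{\lambda}-x_{\lambda}a) \qquad (a\in\uu^{\sharp}). \]

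Finally I would read off the conclusion. Each map $a\mapsto a x_{\lambda}-x_{\lambda}a$ is an inner derivation, hence obeys the Leibniz rule; evaluating it at $ab$, letting $\lambda$ run, and using the norm--continuity of the two module actions in each variable gives $2D(ab)=2D(a)b+2aD(b)$, so that $D(ab)=D(a)b+aD(b)$ and $D$ is a derivation. The step I expect to need the most care is exactly this transfer of the Leibniz identity through the limit: what makes it legitimate is that centrality forces $a x_{\lambda}-x_{\lambda}a$ to converge \emph{in norm} to $2D(a)$, so one never has to control the net $\{x_{\lambda}\}$ itself (which need not be bounded) and the continuity of the module operations does the rest. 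I would stress that the argument yields precisely the asserted conclusion --- that $D$ is a derivation --- through the inner--limit representation above.
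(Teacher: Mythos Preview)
The literal conclusion you prove --- that a central \emph{derivation} is a derivation --- is a tautology: $D$ is assumed to satisfy the Leibniz rule from the outset, so there is nothing to derive. The paper's statement is unfortunately worded, but its own proof establishes the substantive conclusion $\delta=0$, and it is this vanishing that is actually used afterwards (Corollary~\ref{central der ext} states it explicitly, and the proof of Lemma~\ref{maps into submodule} applies the lemma to conclude $\pi_X\circ\delta=0$). Your argument, while internally correct, stops at the tautological conclusion and never reaches $\delta=0$; in particular the inner--limit representation $D(a)=\tfrac{1}{2}\lim_{\lambda}(ax_{\lambda}-x_{\lambda}a)$ does not by itself force $D$ to vanish.

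The paper's route is also different and more direct than going through the Jordan computation of Theorem~\ref{jordan}. Using simultaneously the Leibniz rule and centrality, one computes on elementary tensors
\[
\Phi_{\delta}(ab\otimes c)-\Phi_{\delta}(b\otimes ca)=ab\,\delta(c)-b\,\delta(ca)=ab\,\delta(c)-c\,\delta(a)b-\delta(c)\,ab=-cb\,\delta(a),
\]
so that $\Phi_{\delta}(a\T-\T a)=-\pi(\T^{\circ})\delta(a)$ (the paper has the opposite sign, which is immaterial). Since $\T^{\circ}=\T$, the left-hand side tends to $0$ by boundedness of $\Phi_{\delta}$ and condition (i), while the right-hand side tends to $-\delta(a)$ by condition (ii); hence $\delta(a)=0$ for all $a$. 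If you want to salvage your approach, you would need an additional step showing that a central map which is a norm limit of inner derivations must vanish; the paper's direct computation avoids this detour entirely.
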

\begin{proof}
Let $\delta:\uu \rightarrow X$ be a bounded central derivation and $\lbrace \textbf{t}_{\lambda}\rbrace_{\lambda\in\Lambda}$ be a symmetric approximate diagonal for $\uu^{\sharp}$. We extend $\delta$ to $\uu^{\sharp}$ by putting $\delta(1) = 0$. From the fact that $\delta$ is central, for each $a,b,c\in\uu^{\sharp}$ we have
\begin{equation*}
\begin{split}
\Phi_{\delta}(ab\otimes c)-\Phi_{\delta}(b\otimes ca)&=ab\delta(c)-b\delta(ca)\\&
=ab\delta(c)-\delta(ca)b\\&
=ab\delta(c)-c\delta(a)b-\delta(c)ab\\&
=cb\delta(a).
\end{split}
\end{equation*}
So 
\[\Phi_{\delta}(a\textbf{t}_{\lambda}-\textbf{t}_{\lambda}a)=\pi(\textbf{t}_{\lambda}^{\circ})\delta(a) \]
for all $a\in \uu^{\sharp}$. Since $\textbf{t}_{\lambda}^{\circ}=\textbf{t}_{\lambda}$ ($\lambda\in \Lambda$), $a\textbf{t}_{\lambda}-\textbf{t}_{\lambda}a\longrightarrow 0$ and $\pi(\textbf{t}_{\lambda})\delta(a)\longrightarrow \delta(a)$, it follows that $\delta =0$.
\end{proof}
The restriction of a central derivation to a subalgebra is central so we can extend Lemma \ref{central deri} to the following.
\begin{cor}\label{central der ext}
Let $\uu$ be the smallest closed subalgebra which contains all the closed subalgebras $\mathfrak{V}$ of $\uu$ such that $\mathfrak{V}^{\sharp}$ is symmetrically pseudo-amenable. Then every bounded central derivation with domain $\uu$ is 0. 
\end{cor}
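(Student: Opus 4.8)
The plan is to combine Lemma \ref{central deri} with a density argument. Fix a Banach $\uu$-bimodule $X$ and a bounded central derivation $\delta\colon\uu\to X$; the goal is to show $\delta=0$.

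First I would localize to each generating subalgebra. Let $\mathfrak{V}$ be any closed subalgebra of $\uu$ with $\mathfrak{V}^{\sharp}$ symmetrically pseudo-amenable. Restricting the bimodule actions makes $X$ a Banach $\mathfrak{V}$-bimodule, and $\delta|_{\mathfrak{V}}\colon\mathfrak{V}\to X$ is plainly a bounded derivation. It is central as a derivation of $\mathfrak{V}$, because $\mathfrak{V}\subseteq\uu$ forces $\mathcal{Z}_{\uu}(X)\subseteq\mathcal{Z}_{\mathfrak{V}}(X)$, so $\delta(\mathfrak{V})\subseteq\delta(\uu)\subseteq\mathcal{Z}_{\uu}(X)\subseteq\mathcal{Z}_{\mathfrak{V}}(X)$. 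Hence Lemma \ref{central deri}, applied with $\mathfrak{V}$ in place of $\uu$ and $X$ viewed as a $\mathfrak{V}$-bimodule, yields $\delta|_{\mathfrak{V}}=0$.

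Next I would propagate this vanishing to all of $\uu$. Let $S$ be the union of all such subalgebras $\mathfrak{V}$; by the previous step $\delta$ vanishes on $S$. The smallest closed subalgebra of $\uu$ containing $S$ — which is $\uu$ by hypothesis — is the norm closure of the linear span of all finite products $s_{1}s_{2}\cdots s_{k}$ with each $s_{j}\in S$. Applying the Leibniz rule and inducting on $k$, one gets $\delta(s_{1}s_{2}\cdots s_{k})=0$ whenever each $\delta(s_{j})=0$; by linearity $\delta$ kills the span of these products, and by boundedness (continuity) of $\delta$ it kills the closure of that span, namely $\uu$. Therefore $\delta=0$.

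There is no genuine analytic obstacle here: the only points requiring a little care are the explicit description of the closed subalgebra generated by $S$ (so that continuity of $\delta$ closes the argument) and the observation that restricting both the algebra and the module preserves centrality of the derivation, so that Lemma \ref{central deri} really does apply on each $\mathfrak{V}$.
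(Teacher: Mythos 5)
Your argument is correct and is exactly the route the paper intends: restrict the central derivation to each generating subalgebra $\mathfrak{V}$ (where it remains central because $\mathcal{Z}_{\uu}(X)\subseteq\mathcal{Z}_{\mathfrak{V}}(X)$), apply Lemma \ref{central deri} to get vanishing there, and propagate to all of $\uu$ via the Leibniz rule and continuity --- equivalently, $\ker\delta$ is a closed subalgebra containing every such $\mathfrak{V}$, hence containing $\uu$. You also correctly invoke Lemma \ref{central deri} in the form its proof actually establishes (namely $\delta=0$, not merely ``is a derivation'' as the lemma's statement reads), which is precisely what the corollary's conclusion requires.
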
 
\begin{lem}\label{maps into submodule}
Let $\uu$ be a Banach algebra such that $\uu^{\sharp}$ is symmetrically pseudo-amenable. Suppose that $Y$ is a Banach $\uu$-bimodule and $ X $ is a closed $ \uu $-subbimodule of $ Y $. If $ \delta:\uu\rightarrow Y $ is a bounded derivation and $ \tau:\uu\rightarrow \mathcal{Z}_{\uu}(Y)$ is a linear map such that $ (\delta+\tau)(\uu)\subseteq X $, then $ \delta(\uu)\subseteq X $ and $ \tau(\uu)\subseteq\mathcal{Z}_{\uu}(X) $. 
\end{lem}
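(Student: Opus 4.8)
The plan is to use the symmetric approximate diagonal for $\uu^{\sharp}$ together with the hypothesis $(\delta+\tau)(\uu)\subseteq X$ to squeeze the image of $\delta$ into $X$, and then deduce the statement about $\tau$ formally. First I would extend $\delta$ to $\uu^{\sharp}$ by setting $\delta(1)=0$ (so $\delta$ remains a derivation), extend $\tau$ by $\tau(1)=0$, and fix a symmetric approximate diagonal $\lbrace\T\rbrace_{\lambda\in\Lambda}$ for $\uu^{\sharp}$. Since $\tau$ maps into $\mathcal{Z}_{\uu}(Y)$, the composite $\delta+\tau$ is itself a Lie derivation, and more to the point one can compute $\Phi_{\delta+\tau}$ applied to the "commutator" element $a\T-\T a$: because $\tau$ is central, the terms coming from $\tau$ collapse just as in the proof of Lemma~\ref{central deri}, giving an identity of the form $\Phi_{\delta}(a\T-\T a)+(\text{central-trace terms})=\pi(\T^{\circ})(\delta+\tau)(a)-(\text{stuff})$. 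The exact bookkeeping mirrors the computation $\Phi_{\delta}(ab\otimes c)-\Phi_{\delta}(b\otimes ca)=cb\delta(a)$ used above, adapted to allow the central summand $\tau$; the payoff is a formula expressing $(\delta+\tau)(a)$, or at least $\pi(\T)(\delta+\tau)(a)$, as a limit of elements that visibly lie in $X$, because $\Phi_{\delta}$ and $\Phi_{\tau}$ evaluated on $a\T-\T a$ produce sums of terms of the shape $u\,\delta(v)\,w$ and $u\,\tau(v)\,w$ with $v\in\uu$, hence in the $\uu^{\sharp}$-subbimodule generated by $(\delta+\tau)(\uu)\subseteq X$, which is just $X$ since $X$ is already a closed subbimodule.

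The key steps, in order, would be: (1) set up the extensions and fix $\lbrace\T\rbrace$; (2) derive the algebraic identity relating $\Phi_{\delta}(a\T-\T a)$, $\pi(\T)$, $\delta(a)$ and $\tau(a)$, using centrality of $\tau$ exactly where centrality of $\delta$ was used in Lemma~\ref{central deri}; (3) observe that every term on the "diagonal" side of that identity lies in $X$ — here one uses that $X$ is a closed $\uu$-subbimodule and $(\delta+\tau)(\uu)\subseteq X$, together with the fact that finite-rank tensors are dense so the conclusion passes to the limit; (4) take the limit along $\lambda$ using $a\T-\T a\to 0$ (so the left side tends to $0$) and $\pi(\T)x\to x$ for $x\in X$ viewed in $X^{**}$ if necessary, to conclude $\delta(a)\in X$ for all $a\in\uu^{\sharp}$, hence $\delta(\uu)\subseteq X$; (5) from $\delta(\uu)\subseteq X$ and $(\delta+\tau)(\uu)\subseteq X$ conclude $\tau(\uu)\subseteq X$, and since also $\tau(\uu)\subseteq\mathcal{Z}_{\uu}(Y)$ and $\mathcal{Z}_{\uu}(Y)\cap X=\mathcal{Z}_{\uu}(X)$, deduce $\tau(\uu)\subseteq\mathcal{Z}_{\uu}(X)$.

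The main obstacle I anticipate is step (2)–(3): getting the algebraic identity in a form where the right-hand side is manifestly a limit of elements of $X$. The subtlety is that $\T$ lives in $\uu^{\sharp}\widehat{\otimes}\uu^{\sharp}$, so $\Phi_{\delta}(a\T-\T a)$ will a priori involve terms like $e\,\delta(b)$ and $a\,\delta(b)$ with $b\in\uu^{\sharp}$; one must be careful that $\delta(e)=0$ and that all genuinely "new" contributions are of the form $u(\delta+\tau)(v)w$ with $v\in\uu$ (not just in $\uu^{\sharp}$) so that they land in $X$. Handling the approximation by finite tensors $\T_k=\sum_{j=1}^k a_j\otimes b_j$ and passing to the limit — as was done in the Claim inside the proof of Proposition~\ref{symmetric sa ideal} — is the technical heart; once $X$ is seen to absorb all these pieces, closing $X$ under limits finishes step (4). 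If the boundedness of $\lbrace x_{\lambda}\rbrace$ is not automatic, I would, exactly as in the proof of Theorem~\ref{jordan}, first pass to $X^{**}$, run the argument there, and then note that the conclusion $\delta(\uu)\subseteq X$ is a statement inside the original $X$ because $X$ is norm-closed in $Y$.
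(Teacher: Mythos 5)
There is a genuine gap at your steps (2)--(3), and it is precisely where the paper's key idea enters. The paper does not run the diagonal computation inside $Y$ at all: it passes to the quotient bimodule $W=Y/X$ via the quotient map $\pi_X:Y\rightarrow W$. Since $(\delta+\tau)(\uu)\subseteq X$, one gets $\pi_X\circ\delta=-\pi_X\circ\tau$, and because $\pi_X$ carries $\mathcal{Z}_{\uu}(Y)$ into $\mathcal{Z}_{\uu}(W)$, the map $\pi_X\circ\delta$ is simultaneously a bounded derivation and central-valued; Lemma \ref{central deri} then forces $\pi_X\circ\delta=0$, i.e.\ $\delta(\uu)\subseteq X$ (here closedness of $X$ is what makes $W$ a Banach bimodule), and the statement about $\tau$ follows exactly as in your step (5), which is fine. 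Your plan instead tries to reproduce the computation of Lemma \ref{central deri} inside $Y$ and to conclude that the terms produced "visibly lie in $X$". That assertion is unjustified: the identity for $\Phi_{\delta}(a\T-\T a)$ involves only $\delta$-terms of the shape $u\,\delta(v)\,w$, and such a term individually need not lie in $X$ --- only the matched combination $u\,(\delta+\tau)(v)\,w$ does, and nothing in the identity supplies the matching $\tau$-term with the same $u,v,w$. Saying these terms lie "in the $\uu^{\sharp}$-subbimodule generated by $(\delta+\tau)(\uu)$" is a non sequitur for the same reason.

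There is also a structural obstruction to the patch you sketch: the collapse in Lemma \ref{central deri} uses \emph{both} the Leibniz rule and centrality of the values of one and the same map. Inside $Y$, $\delta$ is a derivation but not central-valued, and $\tau$ is central-valued but not a derivation (it is merely linear), so neither computation goes through; if you try to substitute $\delta=(\delta+\tau)-\tau$ you generate terms such as $(ab-ba)\tau(c)$ that again have no reason to lie in $X$. The only way these two properties coexist in a single map is after reducing modulo $X$, which is exactly the quotient step your proposal omits. Finally, the detour through $X^{**}$ and the concern about boundedness of $\lbrace\Phi_{\delta}(\T)\rbrace$ are unnecessary here: no such boundedness hypotheses appear in the lemma, and the paper's argument needs only the norm convergences $a\T-\T a\rightarrow 0$ and $\pi(\T)a\rightarrow a$ applied in the quotient module.
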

 \begin{proof}
 Let $ \pi_{X}:Y\rightarrow Y/X $ be the quotient map where $ W=Y/X $ is the quotient Banach $ \uu $-bimodule. We have
\[0=\pi_{X}\circ(\delta+\tau)=\pi_{X}\circ\delta+\pi_{X}\circ\tau .\]
Hence
\[\pi_{X}\circ\delta=-\pi_{X}\circ\tau . \]
Since $ \pi_{X} $ maps $ \mathcal{Z}_{\uu}(Y) $ into $ \mathcal{Z}_{\uu}(W) $ and $ \tau(\uu)\subseteq\mathcal{Z}_{\uu}(Y) $, it follows that 
\[ \pi_{X}\circ\delta(\uu)=-\pi_{X}\circ\tau(\uu)\subseteq\mathcal{Z}_{\uu}(W). \] 
So by the fact that $ \pi_X$ is a bounded module homomorphism, $ \pi_X\circ\delta$ is a bounded central derivation from $ \uu $ into $ W $. According to Lemma \ref{central deri}, $ \pi_X\circ\delta=0 $, and hence $\delta(\uu)\subseteq X $. Now from assumption and the obtained result, we have $ \tau(\uu)\subseteq X\cap\mathcal{Z}_{\uu}(Y)=\mathcal{Z}_{\uu}(X)$.
\end{proof}
In the following theorem we  state the main result of this section.
\begin{thm}\label{Lie}
Let $\uu$ be a Banach algebra such that $\uu^{\sharp}$ is symmetrically pseudo-amenable with the a symmetric approximate diagonal $\lbrace \textbf{t}_{\lambda}\rbrace_{\lambda\in\Lambda}$. Suppose $X$ is a Banach $\uu$-bimodule such that for each $x\in X$ the net $\lbrace\psi_{x} (\textbf{t}_{\lambda})\rbrace_{\lambda\in\Lambda}$ is bounded, and $D:\uu \rightarrow X$ is a bounded Lie derivation such that the net $\lbrace\Phi_{D}(\textbf{t}_{\lambda})\rbrace_{\lambda\in\Lambda}$ is bounded. Then there exist a bounded derivation $ d:\uu\rightarrow X $ and a bounded central trace $\tau:\uu\rightarrow\mathcal{Z}_{\uu}(X)  $ such that $ D=d+\tau $.
\end{thm}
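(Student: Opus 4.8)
The plan is to mimic Johnson's argument from \cite[Theorem 9.2]{johnson 1996}, using the symmetric pseudo-amenability of $\uu^{\sharp}$ in place of symmetric amenability, and leveraging Lemma \ref{maps into submodule} and Theorem \ref{jordan} as the two workhorses. First I would extend $D$ to $\uu^{\sharp}$ by $D(1)=0$, noting that it remains a bounded Lie derivation into $X$. The key observation is that a Lie derivation becomes easier to handle once one passes to an associative ``squared'' module: I would consider the $\uu^{\sharp}$-bimodule $Y$ obtained from $X$ (e.g.\ a space of $2\times 2$ matrices over $X$, or $X\oplus X$ with a suitable module action, following Johnson's device) into which one can build a bounded Jordan derivation $\widehat D$ out of $D$. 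Concretely, Johnson's trick is to show that $x\mapsto D(x^2)-D(x)x-xD(x)$ is central-valued, so that subtracting a central correction from $D$ produces a map whose associated ``Jordan defect'' vanishes; equivalently $D$ differs from a Jordan derivation by a central trace.

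The main steps in order: (1) show, using only the Lie derivation identity and elementary manipulation (polarization and the Jacobi-type identities), that the bilinear map $(a,b)\mapsto D(ab)-D(a)b-aD(b)$ takes values in $\mathcal{Z}_{\uu}(X)$ and is symmetric in $a,b$; this is where the algebra happens and is essentially Johnson's Lemma that underlies \cite[Theorem 9.2]{johnson 1996}. (2) Deduce that $\Delta(a):=D(a^2)-D(a)a-aD(a)$ is $\mathcal{Z}_{\uu}(X)$-valued and quadratic, hence by polarization gives a central symmetric bilinear form; define a candidate derivation $d$ and candidate trace $\tau$ on $\uu^{\sharp}$ so that $D=d+\tau$ \emph{modulo} a Jordan derivation, and reduce to the case where $D$ itself is a bounded Jordan derivation. (3) Apply Theorem \ref{jordan}: the hypotheses there — boundedness of $\{\psi_x(\textbf{t}_\lambda)\}$ for each $x$ and boundedness of $\{\Phi_{D}(\textbf{t}_\lambda)\}$ for bounded Jordan derivations — are exactly what we have assumed (for the reduced Jordan derivation one must check $\{\Phi_{\widehat D}(\textbf{t}_\lambda)\}$ is bounded, which follows since $\widehat D$ differs from $D$ by a central, hence by Lemma \ref{central deri}-type reasoning controllable, term). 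This yields that the Jordan part is a genuine derivation $d:\uu^{\sharp}\to X$. (4) Restrict everything back to $\uu$: $d|_{\uu}$ is a bounded derivation into $X$, $\tau|_{\uu}$ is a bounded central trace, and $D=d+\tau$. (5) Finally invoke Lemma \ref{maps into submodule} to guarantee $d(\uu)\subseteq X$ and $\tau(\uu)\subseteq\mathcal{Z}_{\uu}(X)$ rather than merely into the bidual or an enveloping module.

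The hard part will be step (1)–(2): carefully verifying that the ``derivation defect'' of a Lie derivation is central and symmetric, and packaging the decomposition so that the correction term $\tau$ is honestly a \emph{trace} (i.e.\ $\tau([a,b])=0$) and not just a central linear map. This requires the Lie identity $D([a,b])=D(a)b+aD(b)-D(b)a-bD(a)$ to be used twice, together with the observation (stated in the preliminaries) that a central Lie derivation is the same as a central trace; one shows $D-d$ is a central Lie derivation and therefore a central trace. A secondary technical point is ensuring all the auxiliary maps feeding into Theorem \ref{jordan} still satisfy its boundedness hypotheses after the central correction — here one uses that central corrections are governed by expressions of the form $\pi(\textbf{t}_\lambda)\,(\,\cdot\,)$, whose boundedness is automatic from $\pi(\textbf{t}_\lambda)\to e$ and the corresponding $\psi$-boundedness assumption, exactly as in the proof of Theorem \ref{jordan 2} and Lemma \ref{central deri}.
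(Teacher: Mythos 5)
There is a genuine gap at the heart of your plan, namely steps (1)--(2). It is not true that the derivation defect $B(a,b)=D(ab)-D(a)b-aD(b)$ of a Lie derivation is central-valued ``using only the Lie derivation identity and elementary manipulation''. The Lie identity gives only the symmetry $B(a,b)=B(b,a)$; centrality fails already in the simplest examples covered by the theorem. Take $\uu=X=M_2(\mathbb{C})$ (whose unitization is symmetrically pseudo-amenable) and $D(a)=\mathrm{tr}(a)I$, a bounded Lie derivation which is itself a central trace. Then $B(e_{11},e_{12})=-e_{12}$ and $D(e_{11}^2)-D(e_{11})e_{11}-e_{11}D(e_{11})=I-2e_{11}$, neither of which is central. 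So even when the conclusion of the theorem holds trivially, your proposed intermediate claim is false, and the reduction built on it --- subtract a central correction so that what remains is a Jordan derivation, then feed that into Theorem \ref{jordan} --- has no starting point: producing such a central correction is precisely the content of the theorem and cannot be extracted by pure algebra before the analytic input. The auxiliary ``squared module'' device is left unspecified, and the assertion that the corrected map still satisfies hypothesis (ii) of Theorem \ref{jordan} is not verified (it would require already knowing boundedness of the nets attached to the as-yet-unconstructed correction).

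For comparison, the paper does not route through Theorem \ref{jordan} at all; the decomposition is built directly from the symmetric approximate diagonal, in parallel with the proof of Theorem \ref{jordan} rather than as an application of it. One applies $\Phi_D$ to $a\textbf{t}_{\lambda}-\textbf{t}_{\lambda}a$ and uses conditions (i)--(iv) of Proposition \ref{equivalent symmetric} to obtain $D(a)=\lim_{\lambda}\bigl((ax_{\lambda}-x_{\lambda}a)+\psi_{D(a)}(\textbf{t}_{\lambda})\bigr)$ with $x_{\lambda}=\Phi_D(\textbf{t}_{\lambda})$; the two boundedness hypotheses then allow one to define, via a generalized limit, $\Omega\in X^{**}$ and $\tau(a)\in X^{**}$ as weak$^{*}$ cluster values of $x_{\lambda}$ and of $\psi_{D(a)}(\textbf{t}_{\lambda})$, giving $D=d+\tau$ with $d(a)=a\Omega-\Omega a$ inner into $X^{**}$. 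Centrality of $\tau$ comes from the same computation with $a\textbf{t}_{\lambda}-\textbf{t}_{\lambda}a\to 0$ and symmetry of $\textbf{t}_{\lambda}$ carried out in the proof of Theorem \ref{jordan}; the trace property then follows from your (correct) endgame observation applied to $\tau=D-d$, which is a central Lie derivation; and Lemma \ref{maps into submodule} pulls $d$ and $\tau$ back into $X$ and $\mathcal{Z}_{\uu}(X)$. Your steps (4)--(5) do match the paper; to repair the proposal, replace steps (1)--(3) by this direct construction of $\Omega$ and $\tau$ in the bidual.
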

\begin{proof}
Assume that $D:\uu \rightarrow X$  is a bounded Lie derivation. We extend $D$ to $\uu^{\sharp}$ by putting $D(1) = 0$. Then 
\[\Phi_D(a\textbf{t}_{\lambda}-\textbf{t}_{\lambda}a)=a\Phi_D(\textbf{t}_{\lambda}) -\Phi_D(a\circ \textbf{t}_{\lambda})+\psi_{D(a)}(\textbf{t}_{\lambda}) -\pi(\textbf{t}_{\lambda})D(a)+\Phi_D( \textbf{t}_{\lambda}\circ a)-\Phi_D(\textbf{t}_{\lambda})a\]
for each $a\in \uu^{\sharp}$. Let $x_{\lambda}:=\Phi_D(\textbf{t}_{\lambda})$. Since $a\textbf{t}_{\lambda}-\textbf{t}_{\lambda}a\longrightarrow 0$, $a\circ\textbf{t}_{\lambda}-\textbf{t}_{\lambda}\circ a\longrightarrow 0$ and $\pi(\textbf{t}_{\lambda})D(a)\longrightarrow D(a)$, it follows that 
\begin{equation}\label{lim2}
D(a)=\lim _{\lambda}((ax_{\lambda}-x_{\lambda}a)+\psi_{D(a)}(\textbf{t}_{\lambda}))
\end{equation} 
for $a\in \uu^{\sharp}$. Now, viewing $ X $  as a closed $\uu^{\sharp}$-subbimodule of $X^{**}$, and hence $ D $ is a bounded Lie derivation from $ \uu^{\sharp} $ to $ X^{**} $. In view of our assumptions, define $ \Omega\in X^{**} $ and the bounded linear map $ \tau:\uu^{\sharp}\rightarrow X^{**} $ by
\[ \langle\Omega,f\rangle=Lim_{\lambda}\langle x_{\lambda},f\rangle \]
and
\[\langle\tau(a),f\rangle=Lim_{\lambda}\langle \psi_{D(a)}(\textbf{t}_{\lambda}),f\rangle ,\]
where $a\in\uu^{\sharp}$, $f\in X^{*} $ and $ Lim_{\lambda} $ is a generalized limit on $ \Lambda $.  It follows from \eqref{lim2} that 
\[\langle D(a),f \rangle = \langle a\Omega -\Omega a , f\rangle +\langle\tau(a),f\rangle \]
for any $a\in\uu^{\sharp}$ and $f\in X^{*} $. Consequentially,
\[D(a)=a\Omega -\Omega a+\tau(a)\]
for each $a\in\uu^{\sharp}$. The linear map $d:\uu^{\sharp}\rightarrow X^{**}  $ defined by $d(a)=a\Omega-\Omega a$ is a continuous derivation, and therefore $D=d+\tau $. Also, with a proof similar to the proof of centrality of $\Delta $ in the proof of Theorem \ref{jordan}, we have $  \tau(a)\in\mathcal{Z}_{\uu^{\sharp}}(X^{**})\subseteq \mathcal{Z}_{\uu}(X^{**})$. So $\tau=D-d $ is a bounded Lie derivation, and from the fact that $ \tau(\uu)\subseteq\mathcal{Z}_{\uu^{\sharp}}(X^{**}) $, it follows that $  \tau([a,b])=0$ for every $ a,b\in\uu^{\sharp} $.  The conditions of Lemma \ref{maps into submodule} hold for $ d$ and $\tau $ on $\uu$, hence $ d$ maps $\uu$ to $X $ and $ \tau$ maps $\uu$ to $\mathcal{Z}_{\uu}(X) $.
\end{proof}
The following result is immediate.
\begin{cor}\label{Lie all}
Let $\uu$ be a Banach algebra such that $\uu^{\sharp}$ is symmetrically pseudo-amenable with the a symmetric approximate diagonal $\lbrace \textbf{t}_{\lambda}\rbrace_{\lambda\in\Lambda}$. Suppose $X$ is a Banach $\uu$-bimodule such that 
\begin{itemize}
\item[(i)] for each $x\in X$ the net $\lbrace\psi_{x} (\textbf{t}_{\lambda})\rbrace_{\lambda\in\Lambda}$ is bounded, and
\item[(ii)] for each bounded Lie derivation $D:\uu \rightarrow X$ the net $\lbrace\Phi_{D}(\textbf{t}_{\lambda})\rbrace_{\lambda\in\Lambda}$ is bounded.
\end{itemize}
Then for every bounded Lie derivation $ D: \uu \rightarrow X $ there exist a bounded derivation $ d:\uu\rightarrow X $ and a bounded central trace $\tau:\uu\rightarrow\mathcal{Z}_{\uu}(X)  $ such that $ D=d+\tau $.
\end{cor}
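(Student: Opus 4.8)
The plan is to derive Corollary~\ref{Lie all} as a direct specialization of Theorem~\ref{Lie}. The only gap between the two statements is that Theorem~\ref{Lie} takes a single fixed bounded Lie derivation $D$ whose associated net $\lbrace \Phi_D(\textbf{t}_\lambda)\rbrace_{\lambda\in\Lambda}$ is assumed bounded, whereas the corollary hypothesizes, in condition (ii), that this boundedness holds \emph{simultaneously} for \emph{every} bounded Lie derivation $D:\uu\rightarrow X$. So the argument is essentially a quantifier move.

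First I would fix an arbitrary bounded Lie derivation $D:\uu\rightarrow X$. Condition (i) of the corollary is literally condition (i) of Theorem~\ref{Lie}, namely that for each $x\in X$ the net $\lbrace \psi_x(\textbf{t}_\lambda)\rbrace_{\lambda\in\Lambda}$ is bounded. Next, condition (ii) of the corollary applied to this particular $D$ gives precisely the remaining hypothesis of Theorem~\ref{Lie}, that $\lbrace \Phi_D(\textbf{t}_\lambda)\rbrace_{\lambda\in\Lambda}$ is bounded. Hence all hypotheses of Theorem~\ref{Lie} are met, and I would invoke it to obtain a bounded derivation $d:\uu\rightarrow X$ and a bounded central trace $\tau:\uu\rightarrow \mathcal{Z}_{\uu}(X)$ with $D=d+\tau$. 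Since $D$ was arbitrary among bounded Lie derivations into $X$, this establishes the corollary.

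There is essentially no obstacle here: the corollary is a routine repackaging, with the role of the ad hoc boundedness assumption on a single $D$ in Theorem~\ref{Lie} being absorbed into a uniform hypothesis over the whole class of bounded Lie derivations. The only thing worth a sentence of care is confirming that Theorem~\ref{Lie}'s conclusion ($\tau$ being not merely central but a trace, i.e.\ $\tau([a,b])=0$) is already part of that theorem's statement, so nothing extra need be proved. A clean write-up would thus be a two- or three-line proof consisting of: (a) fix $D$; (b) note (i) and (ii) supply exactly the hypotheses of Theorem~\ref{Lie}; (c) apply Theorem~\ref{Lie} and conclude.

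\begin{proof}
Let $D:\uu\rightarrow X$ be an arbitrary bounded Lie derivation. By hypothesis (i), for each $x\in X$ the net $\lbrace \psi_x(\textbf{t}_\lambda)\rbrace_{\lambda\in\Lambda}$ is bounded, and by hypothesis (ii) applied to $D$, the net $\lbrace \Phi_D(\textbf{t}_\lambda)\rbrace_{\lambda\in\Lambda}$ is bounded. Thus all the hypotheses of Theorem~\ref{Lie} are satisfied, and there exist a bounded derivation $d:\uu\rightarrow X$ and a bounded central trace $\tau:\uu\rightarrow\mathcal{Z}_{\uu}(X)$ such that $D=d+\tau$. Since $D$ was an arbitrary bounded Lie derivation from $\uu$ into $X$, this completes the proof.
\end{proof}
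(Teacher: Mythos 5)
Your proposal is correct and matches the paper's intent exactly: the paper states this corollary is ``immediate'' from Theorem~\ref{Lie}, and your argument---fixing an arbitrary bounded Lie derivation $D$, noting that hypotheses (i) and (ii) supply precisely the assumptions of Theorem~\ref{Lie} for that $D$, and then applying the theorem---is just that specialization spelled out.
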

Similar to the proof of Corollary \ref{jordan johnson}, the following result is obtained, which is proved in \cite[Theorem 9.2]{johnson 1996}. Therefore, it can be said that Theorem \ref{Lie} (and Corollary \ref{Lie all}) is a generalization of \cite[Theorem 9.2]{johnson 1996}.
\begin{cor}\label{lie johnson}
Let $\uu$ be a symmetrically amenable Banach algebra and $X$ be a Banach $\uu$-bimodule. Then for every bounded Lie derivation $ D: \uu \rightarrow X $ there exist a bounded derivation $ d:\uu\rightarrow X $ and a bounded central trace $\tau:\uu\rightarrow\mathcal{Z}_{\uu}(X)  $ such that $ D=d+\tau $.
\end{cor}

\subsection*{Acknowledgment}
The authors like to express their sincere thanks to the referee(s) for this paper.



\bibliographystyle{amsplain}
\bibliography{xbib}

\end{document}